\documentclass[11pt,a4paper]{article}
\usepackage[qm]{qcircuit}

\usepackage{mathtools}
\usepackage{authblk} 
\usepackage{algpseudocode,algorithmicx,algorithm}

\usepackage{mathrsfs}
\usepackage{latexsym,bm}
\usepackage{amsmath,amsfonts,amssymb,amsthm}
\usepackage{extarrows}

\usepackage{graphicx,subfigure,epstopdf,float}
\usepackage{enumerate,cases,multirow}
\usepackage{makecell}
\usepackage{caption}
\usepackage{xcolor}

\usepackage{longtable,colortbl,arydshln,threeparttable}
\definecolor{mygray}{gray}{.9}

\usepackage{indentfirst}
\usepackage[top=25mm,bottom=20mm,left=25mm,right=20mm]{geometry}
\renewcommand{\baselinestretch}{1.3}

\usepackage{cite}

\usepackage{listings}

\usepackage{makeidx}        
\usepackage{booktabs}
\usepackage[bookmarks,bookmarksnumbered,colorlinks,citecolor=red,linkcolor=blue,hyperindex,linktocpage=true]{hyperref}

\newcommand{\ket}[1]{| #1 \rangle} 
\newcommand{\bra}[1]{\langle #1 |} 

\def \d {\mathrm{d}}

\def \i {\mathrm{i}}

\DeclareMathOperator{\diag}{diag}
\DeclareMathOperator{\range}{range}

\newcounter{parentalgorithm}

\makeatother

\newtheorem{theorem}{Theorem}[section]
\newtheorem{lemma}{Lemma}[section]

\newtheorem{definition}{Definition}[section]

\theoremstyle{remark}
\newtheorem{remark}{\bf Remark}[section]

\numberwithin{equation}{section}

\begin{document}

\title{\bfseries A structure-aware construction of quantum BPX preconditioners  for Q1 finite element discretizations of reaction--diffusion problems}

\author{Yin Yang\thanks{yangyinxtu@xtu.edu.cn}}
\author{Yue Yu\thanks{terenceyuyue@xtu.edu.cn}}
\author{Long Zhang}

\affil{School of Mathematics and Computational Science, Xiangtan University, Xiangtan, Hunan 411105, China}
\affil{Hunan Research Center of the Basic Discipline Fundamental Algorithmic Theory and Novel Computational Methods, Xiangtan, Hunan 411105, China}
\affil{National Center for Applied Mathematics in Hunan, Xiangtan, Hunan 411105, China}

\maketitle

\begin{abstract}
We present a structure-aware construction of quantum BPX preconditioners for Q1 finite element discretizations of constant-coefficient reaction--diffusion equations on the unit cube, including the Poisson equation. Building on existing quantum finite element and finite difference methods, we express the preconditioned operator through explicit one-dimensional maps, tensor products, and norm-preserving interlevel transfers. Symmetric preconditioning gives multilevel coefficients, and its favorable conditioning alone does not ensure efficient recovery of a state in the original finite element nodal basis. We instead use the local $L^2$-orthonormal basis underlying the construction as the output representation of the same finite element function. Its coefficient norm equals the continuous $L^2$ norm. Assuming direct access to the preconditioned right-hand side, we prepare this orthonormal coefficient state with a represented function error at most $\varepsilon$. The expected query complexity is
\[
\mathcal{O}\!\Big(dL^3\frac{\|f\|_{L^2}}{\|u\|_{L^2}}
\log\!\Big(2+\frac{L\|f\|_{L^2}}{\varepsilon}\Big)\Big),
\]
where $u$ is the exact solution, $f$ is the right-hand side, $d$ is the spatial dimension, and $L=\log_2(1/h)$ for mesh width $h$. The bound holds for $0<\varepsilon\le\|u\|_{L^2}/2$ and depends only polylogarithmically on mesh refinement for fixed continuous data. We also analyze linear-functional estimation and illustrate the construction and continuous error bounds with a one-dimensional UnitaryLab experiment.
\end{abstract}

\textbf{Keywords}: Reaction--diffusion equation; Q1 finite elements; Quantum algorithms; BPX preconditioning; Block encoding

\section{Introduction}\label{sec:introduction}

Finite element discretizations of elliptic partial differential equations lead to large sparse linear systems whose conditioning deteriorates under mesh refinement \cite{Brenner2008}. Multilevel preconditioning addresses this difficulty by combining corrections on a hierarchy of approximation spaces. The classical BPX construction \cite{BramblePasciakXu1990} and the subspace-correction framework \cite{Xu1992SubspaceCorrection} provide the foundations for such additive multilevel methods.

Quantum linear system algorithms prepare states encoding solutions without computing every coefficient, beginning with the algorithm of Harrow, Hassidim, and Lloyd \cite{HHL2009}. Subsequent developments improve the dependence on precision \cite{Childs2017QLSA} and provide a general framework for matrix transformations through block encodings \cite{Gilyen2019QSVD}. For elliptic problems, Cao et al.\ \cite{Cao2013Poisson} give a quantum algorithm and circuit construction for the discretized Poisson equation, while Montanaro and Pallister \cite{qFEM-2016} analyze finite element algorithms with discretization and output accuracy included in the comparison with classical methods. These works emphasize that input access, matrix conditioning, and the requested output must all enter the complexity analysis.

Quantum preconditioning has been studied for general linear systems \cite{Clader2013preconditioned}, including fast inversion for constructing inverse preconditioners \cite{Tong2021FastInversion}. For multilevel discretizations, a central difficulty is that multiplying separate block encodings of the system matrix and the preconditioner can retain the unfavorable normalization of the original discretization. A spectral bound for the preconditioned matrix alone is therefore insufficient.

Deiml and Peterseim \cite{DeimlPeterseim2025QFEM} address this issue for Q1 discretizations of elliptic diffusion problems by constructing the preconditioned gradient through local $L^2$-orthonormal coordinates and multilevel transfers. Their analysis allows variable diffusion tensors and establishes complexity bounds for linear functionals. Jin, Liu, Ma, and Yu \cite{JinLiuMaYu2026BPXrevised} develop a related finite difference construction using a commuting identity and Schr\"odingerization. Together, these works motivate constructing the complete preconditioned operator from its discretization structure.

The present work develops this approach for the Q1 discretization of $-\Delta u+cu=f$ on the unit cube, with constant $c\ge0$. Reaction--diffusion systems also occur as the shifted elliptic subproblems in rational approximations of fractional operators. We examine a BPX factor that combines inverse mass square roots with weights depending on the reaction coefficient. The analysis tracks the dimension, refinement level, and reaction coefficient explicitly, and connects circuit accuracy to the continuous $L^2$ error of the represented finite element function.

The choice of output representation is essential. Symmetric BPX preconditioning solves for multilevel coefficients; obtaining a state of the coefficients in the original finite element nodal basis requires a further transformation whose cost is not controlled by the preconditioned condition number alone. We therefore use the local $L^2$-orthonormal basis, already introduced to construct the block encodings, to represent the output solution. This changes the coefficient representation while preserving the finite element function. The Euclidean error of these orthonormal coefficients is exactly the continuous $L^2$ error between the represented functions, so the analysis can address continuous accuracy directly. Our solution-state complexity bound concerns this orthonormal representation; it does not include an additional conversion to a nodal coefficient state.

Building on the quantum finite element framework of \cite{DeimlPeterseim2025QFEM}, we focus on the parameters and coefficient representations that determine the quantum cost. That work already provides the underlying orthonormal gradient construction and isometric transfers, with a block-encoding normalization proportional to $dL$ for unit diffusion. The present contributions refine this framework in the following four respects.
\begin{itemize}
\item \textbf{Explicit dimension dependence in the quantum error and complexity analysis.}
We give a finite element $L^2$ error constant of $3$, an effective condition-number bound of $12dL^2$ for the BPX system, and a matrix block-encoding normalization of $12dL$. Here $d$ is the spatial dimension, $L=\log_2(1/h)$ is the number of refinement levels, and $h$ is the finest mesh width; the effective condition number uses only nonzero eigenvalues. These bounds hold uniformly in the reaction coefficient $c\ge0$ and make explicit the dimension dependence left implicit in the BPX condition estimate of \cite{DeimlPeterseim2025QFEM}.

\item \textbf{Solution-state preparation under a continuous $L^2$ error requirement.}
Let $u$ be the exact solution with right-hand side $f$, and let $u_h^q$ denote the function represented by the output coefficients in the local $L^2$-orthonormal basis. Assuming direct access to the preconditioned right-hand side, we establish $\|u-u_h^q\|_{L^2}\le\varepsilon$ for a prescribed accuracy $0<\varepsilon\le\|u\|_{L^2}/2$. For fixed $\|f\|_{L^2}/\|u\|_{L^2}$, the query bound has no algebraic inverse-mesh factor, including the cost of converting multilevel coefficients to this orthonormal coefficient state. This complements the linear-functional complexity result in \cite[Theorem~6.5]{DeimlPeterseim2025QFEM}.

\item \textbf{A structure-aware block encoding of the preconditioned system.}
Following the approach of \cite{JinLiuMaYu2026BPXrevised}, we exploit the nested Cartesian grids to express the required maps through explicit one-dimensional matrices and tensor products. Commuting identities move differentiation to each coarse level, where the BPX weights balance its scale, and norm-preserving transfers then lift the coefficients to the finest level. This gives a block encoding of the complete preconditioned operator with controlled normalization. By specializing the framework of \cite{DeimlPeterseim2025QFEM} to this tensor structure, we obtain a transparent sequence of encoding steps with explicit normalization factors.

\item \textbf{A modified BPX factor connecting quantum coefficients to continuous norms.}
The level-$\ell$ block of our BPX factor is $S_\ell=(h_\ell^{-2}+c)^{-1/2}P_\ell M_\ell^{-1/2}$, where $h_\ell$ is the mesh width, $M_\ell$ is the mass matrix, and $P_\ell$ prolongs nodal coefficients to the finest grid. The inverse mass square root replaces scalar mass scaling. On each level, the Euclidean norm of the coefficients in the mass-normalized basis equals the represented function's $L^2$ norm, giving an exact correspondence without additional mesh- or dimension-dependent norm-equivalence factors.
\end{itemize}

These query bounds exclude construction of the supplied input oracles and classical readout of the full solution. A one-dimensional UnitaryLab experiment illustrates finite element convergence, the preconditioned spectrum, and the accuracy of the represented response; it does not serve as evidence of asymptotic quantum advantage.

In Section~\ref{sec:bpx}, we formulate the Q1 discretization and BPX system and establish their error and spectral bounds. Section~\ref{sec:encoding} develops the block encodings from local coefficients and interlevel transfers. We analyze continuous accuracy and the two output tasks in Section~\ref{sec:accuracy}. Numerical results are presented in Section~\ref{sec:numerics}, and Section~\ref{sec:discussion} compares the costs and discusses the conclusions. Appendix~\ref{app:feconstant} gives the proof of an explicit finite element error constant.

\section{Multilevel BPX preconditioning for Q1 finite elements}\label{sec:bpx}

Let $\Omega=(0,1)^d$. We give a review of a BPX preconditioner for the Q1 finite element discretization of a reaction--diffusion equation with homogeneous Dirichlet boundary conditions:
\begin{equation}\label{poisson}
-\Delta u+cu=f\quad\text{in }\Omega,\qquad u=0\quad\text{on }\partial\Omega,
\end{equation}
where $c>0$ is a constant reaction coefficient and $f\in L^2(\Omega)$.

\subsection{Q1 finite element discretization}\label{sec:model}

Let $V=H_0^1(\Omega)$. The variational formulation of \eqref{poisson} is to find $u\in V$ such that
\[
a(u,v)=(f,v)\qquad\text{for all }v\in V,
\]
where
\[
a(w,v):=(\nabla w,\nabla v)+c(w,v),\qquad w,v\in V,
\]
and $(\cdot,\cdot)$ denotes the $L^2(\Omega)$ inner product.

We next consider a conforming Q1 finite element discretization. For $\ell=1,\cdots,L$, let $\mathcal T_\ell$ be the uniform Cartesian partition of $\Omega$ into cubes of side length $h_\ell=2^{-\ell}$. Define
\[
V_\ell:=\{v\in V:v|_T\in\mathbb Q_1(T)\text{ for every }T\in\mathcal T_\ell\},
\]
where $\mathbb Q_1(T)$ consists of polynomials of degree at most one in each coordinate. The spaces are nested, $V_1\subset\cdots\subset V_L$, and $\dim V_\ell=(2^\ell-1)^d$. Set $h=h_L$, $V_h=V_L$, and $N=\dim V_h=(2^L-1)^d$. The discrete variational problem is to find $u_h\in V_h$ such that
\[
a(u_h,v_h)=(f,v_h)\qquad\text{for all }v_h\in V_h.
\]

To express this problem in nodal coordinates, let $x_j^{(\ell)}=jh_\ell$, $0\le j\le2^\ell$, be the one-dimensional mesh nodes. For $1\le j\le2^\ell-1$, the nodal basis function $\phi_j^{(\ell)}$ is continuous and affine on each mesh interval, and satisfies $\phi_j^{(\ell)}(x_i^{(\ell)})=\delta_{ij}$ for $0\le i\le2^\ell$, where $\delta_{ij}$ is the Kronecker delta. For the spatial coordinate vector $\bm{x}=(x_1,\cdots,x_d)\in\Omega$, the tensor-product basis of $V_\ell$ is
\[
\phi_{\bm{j}}^{(\ell)}(\bm{x})=\prod_{q=1}^d\phi_{j_q}^{(\ell)}(x_q),
\qquad \bm{j}=(j_1,\cdots,j_d)\in \mathcal{I}_{\ell,d}: = \{1,\cdots,2^\ell-1\}^d.
\]
The finite element solution has the expansion
\[
u_h=\sum_{\bm{j}\in\mathcal{I}_{\ell,d}}
(\bm{u}_h)_{\bm{j}}\phi_{\bm{j}}^{(L)},\qquad \bm{u}_h\in\mathbb R^N.
\]
The level-$\ell$ mass and stiffness matrices are defined by
\[
(M_\ell)_{\bm{i},\bm{j}}=(\phi_{\bm{j}}^{(\ell)},\phi_{\bm{i}}^{(\ell)}),\qquad
(A_\ell)_{\bm{i},\bm{j}}=(\nabla\phi_{\bm{j}}^{(\ell)},\nabla\phi_{\bm{i}}^{(\ell)}).
\]
The one-dimensional mass and stiffness matrices, both of size $(2^\ell-1)\times(2^\ell-1)$, are
\begin{equation}\label{onedmatrices}
M_{h_\ell}=\frac{h_\ell}{6}
\begin{bmatrix}
4&1&&&0\\
1&4&1&&\\
&\ddots&\ddots&\ddots&\\
&&1&4&1\\
0&&&1&4
\end{bmatrix}, \qquad
A_{h_\ell} =\frac1{h_\ell}
\begin{bmatrix}
2&-1&&&0\\
-1&2&-1&&\\
&\ddots&\ddots&\ddots&\\
&&-1&2&-1\\
0&&&-1&2
\end{bmatrix}.
\end{equation}
The tensor-product basis and separation of the integrals yield
\begin{equation}\label{tensormatrices}
M_\ell=M_{h_\ell}^{\otimes d},\qquad
A_\ell=\sum_{q=1}^d M_{h_\ell}^{\otimes(q-1)}\otimes A_{h_\ell}\otimes M_{h_\ell}^{\otimes(d-q)}.
\end{equation}
The matrix representing the full bilinear form $a$ on $V_\ell$ is
\begin{equation}\label{reactionmatrix}
K_\ell=A_\ell+cM_\ell,\qquad
K_{h,d}=K_L=A_{h,d}+cM_{h,d},
\end{equation}
where $A_{h,d}=A_L$ and $M_{h,d}=M_L$.
The discrete variational problem is therefore equivalent to
\begin{equation}\label{femsystem}
K_{h,d}\bm{u}_h=\bm{f}_{h,d},\qquad
(\bm{f}_{h,d})_{\bm{j}}=(f,\phi_{\bm{j}}^{(L)}).
\end{equation}

For later norm estimates, define $\mathcal A_h:V_h\to V_h$ by
$(\mathcal A_hv_h,w_h)=a(v_h,w_h)$ for all $v_h,w_h\in V_h$.
If $\bm{v}$ and $\bm{w}$ are the nodal vectors of $v_h$ and $\mathcal A_hv_h$, respectively, this definition gives $M_{h,d}\bm{w}=K_{h,d}\bm{v}$. Thus the coefficient matrix of $\mathcal A_h$ is $M_{h,d}^{-1}K_{h,d}=M_{h,d}^{-1}A_{h,d}+cI$. Let $\Pi_h$ denote the $L^2$-orthogonal projection onto $V_h$. Equation~\eqref{femsystem} is equivalently $u_h=\mathcal A_h^{-1}\Pi_hf$.

For each level $\ell$, define the one-dimensional sine vectors $\bm{s}_j^{(\ell)}\in\mathbb R^{2^\ell-1}$ by
\[
(\bm{s}_j^{(\ell)})_i=\sqrt{\frac{2}{2^\ell}}\sin\!\Big(\frac{\pi ij}{2^\ell}\Big),
\qquad 1\le i,j\le2^\ell-1.
\]
They form an orthonormal basis in the Euclidean inner product and satisfy $A_{h_\ell}\bm{s}_j^{(\ell)}=\lambda_j^{(\ell)}\bm{s}_j^{(\ell)}$ and $M_{h_\ell}\bm{s}_j^{(\ell)}=\mu_j^{(\ell)}\bm{s}_j^{(\ell)}$, where
\[
\lambda_j^{(\ell)}=\frac4{h_\ell}\sin^2\frac{\pi j}{2^{\ell+1}},\qquad
\mu_j^{(\ell)}=\frac{h_\ell}{3}\Big(2+\cos\frac{\pi j}{2^\ell}\Big).
\]
Let $v_h\in V_h\setminus\{0\}$ have nodal coefficient vector $\bm{v}$. Since $\mathcal A_h$ is represented by $M_{h,d}^{-1}K_{h,d}$, the eigenvalue equation $\mathcal A_hv_h=\theta v_h$ is equivalent to
\[
M_{h,d}^{-1}K_{h,d}\bm{v}=\theta\bm{v}
\quad\Longleftrightarrow\quad
K_{h,d}\bm{v}=\theta M_{h,d}\bm{v}.
\]
The values $\theta$ satisfying the latter equation for some $\bm{v}\ne0$ are called the generalized eigenvalues of the matrix pair $(K_{h,d},M_{h,d})$. On the finest mesh, the corresponding $d$-dimensional basis vectors are
\[
\bm{s}_{\bm{j}}=\bm{s}_{j_1}^{(L)}\otimes\cdots\otimes\bm{s}_{j_d}^{(L)},
\qquad \bm{j}=(j_1,\cdots,j_d)\in \mathcal{I}_{L,d}.
\]
These $N$ vectors form an orthonormal basis of the nodal coefficient space $\mathbb R^N$. Substituting $\bm{v}=\bm{s}_{\bm{j}}$ into the generalized eigenvalue equation and using \eqref{tensormatrices} and \eqref{reactionmatrix} gives
\begin{equation}\label{generalizedspectrum}
\theta_{\bm{j}}=c+\sum_{q=1}^d\frac{\lambda_{j_q}^{(L)}}{\mu_{j_q}^{(L)}}, \qquad 
c+4d\le\theta_{\bm{j}}\le c+\frac{12d}{h^2}.
\end{equation}
These eigenvalues describe $\mathcal A_h$ in the $L^2$ inner product and will be used to compare the norms of $\Pi_hf$ and $u_h$. The reaction term shifts each diffusion eigenvalue by $c$. The upper bound in \eqref{generalizedspectrum} follows from $\lambda_j^{(L)}\le4/h$ and $\mu_j^{(L)}\ge h/3$; the lower bound follows from $\sin(\pi/2^{L+1})\ge2^{-L}$ and $\mu_j^{(L)}\le h$. The mass eigenvalues also give
\begin{equation}\label{massbounds}
\frac{h_\ell^d}{3^d}I\preceq M_\ell\preceq h_\ell^d I.
\end{equation}

\begin{lemma}[Finite element error]\label{lem:feerror}
Suppose $u\in H^2(\Omega)\cap H_0^1(\Omega)$. The Q1 Galerkin approximation satisfies
\begin{equation}\label{feerror}
\|u-u_h\|_{L^2}\le C_{\mathrm{FE}}h^2\|u\|_{H^2},
\qquad C_{\mathrm{FE}}=3.
\end{equation}
\end{lemma}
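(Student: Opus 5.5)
The plan is the standard Aubin--Nitsche argument with every constant tracked explicitly. The aim is the energy estimate $\|u-u_h\|_a\le C_1h|u|_{H^2}$, with $\|v\|_a^2:=a(v,v)$, and then $\|u-u_h\|_{L^2}\le C_1h\,\|u-u_h\|_a$ by duality, so that $C_{\mathrm{FE}}=C_1^2$ with $C_1=\sqrt3$, or some product of this kind bounded by $3$.

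\textbf{Energy estimate.} By Galerkin orthogonality, $\|u-u_h\|_a\le\|u-I_hu\|_a$, where $I_h$ is the Q1 nodal interpolant on $\mathcal T_L$. It is well defined because $H^2\subset C^0$ for $d\le3$. If $d$ is general, I would replace $I_h$ by the tensor product $\Pi^{(1)}\otimes\cdots\otimes\Pi^{(d)}$ of one-dimensional $H^1_0$-projections (or interpolants), which needs only mixed regularity along lines. For the explicit constant I would use the tensor structure: $I_h=\prod_q I^{(q)}$, where $I^{(q)}$ interpolates in the variable $x_q$. The one-dimensional facts are $\|w-I^{(q)}w\|_{L^2}\le (h/\pi)^2\|\partial_q^2w\|_{L^2}$, $\|\partial_q(w-I^{(q)}w)\|_{L^2}\le (h/\pi)\|\partial_q^2w\|_{L^2}$, and $\|\partial_q I^{(q)}w\|\le\|\partial_qw\|$. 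Each follows from the sharp Poincar\'e/Wirtinger inequality on an interval with zero endpoint values. I would telescope $w-I_hw=\sum_q I^{(1)}\cdots I^{(q-1)}(I-I^{(q)})w$ and bound $\partial_p$ of each term, using commutation of interpolation in one variable with differentiation in another. This gives $|u-I_hu|_{H^1}\le C h\|u\|_{H^2}$ with an explicit $C$ of order $d/\pi$.

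\textbf{Handling the reaction term and $d$.} The zeroth-order contribution is $c\|u-I_hu\|^2_{L^2}$, which is not uniform in $c$. I would avoid it by choosing the $a$-projection only in the energy step, and do the duality with the reaction-free $H^1$ seminorm where possible. Alternatively, I would use $c\|u\|\le\|f\|$-type regularity, $\|u\|_{H^2}\lesssim\|f\|$, so that $\|z\|_{H^2}\le\|u-u_h\|_{L^2}$ in the dual step.

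\textbf{Duality.} Let $z\in H^2\cap H^1_0$ solve $-\Delta z+cz=u-u_h$. On the convex domain $\Omega$, the standard estimate is $|z|_{H^2}=\|\Delta z\|_{L^2}\le\|u-u_h\|_{L^2}$, via integration by parts, $\|D^2z\|=\|\Delta z\|$ on cubes, and $\|\Delta z\|\le\|{-\Delta z}+cz\|$ obtained by testing with $-\Delta z$. Then
\[
\|u-u_h\|^2_{L^2}=a(u-u_h,z-I_hz)\le\|u-u_h\|_a\|z-I_hz\|_a .
\]
Combining the two estimates gives the $h^2$ bound.

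\textbf{Main obstacle.} The real difficulty is getting the constant $3$ uniformly in $d$ and $c$. The interpolation constant grows with $d$ through the mixed terms, and $c$ enters $\|\cdot\|_a$. I would first try to keep everything in $|\cdot|_{H^2}$ seminorms: $\|D^2z\|_{L^2}\le\|u-u_h\|$ holds exactly on cubes, and the $c$-part of $\|z-I_hz\|_a$ is absorbed using $c\|z\|\le\|u-u_h\|$. I would then check whether the tensor telescoping yields a $d$-independent constant relative to the full $H^2$ norm, which includes all mixed derivatives and thus plausibly absorbs the factor $d$. If it does not, the fallback is to state the constant with the appendix computation.
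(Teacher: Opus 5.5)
Your outline is a plan rather than a proof. The two places you flag as the ``main obstacle'' are exactly where the proposed tools fail.

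First, the tensor telescoping with nodal interpolants $I^{(q)}$ does not close under $H^2$ regularity. Whatever ordering you use, $\partial_p$ of a telescoping term leaves undifferentiated factors $I^{(r)}$, $r\ne p$, and these must be removed in $L^2$. Nodal interpolation is not $L^2$-stable: the bound $\|I^{(r)}g\|\le\|g\|+(h/\pi)\|\partial_rg\|$ costs an extra derivative. For $d\ge3$, removing two such factors requires a third-order mixed derivative, which $\|u\|_{H^2}$ does not control. For $d\ge4$ the interpolant is not even defined on $H^2$. The one-dimensional $H_0^1$-projection you offer as a substitute coincides with the nodal interpolant, so it does not help. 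Moreover, summing the $d$ telescoping terms by the triangle inequality gives the $d$-dependent constant you anticipate, so $C_{\mathrm{FE}}$ would grow like $d^2$.

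The missing idea is to use the one-dimensional $L^2$-orthogonal projections $Q^{(q)}$. They are $L^2$ contractions and commute with $\partial_p$ for $p\ne q$. They are also $H^1$-stable with $\gamma=1+\sqrt{12}/\pi$, which follows from $QI=I$, the inverse estimate with constant $\sqrt{12}/h$, and $\|v-Iv\|\le(h/\pi)\|v'\|$. For each $p$, the paper splits $v-\Pi_hv=(I-Q^{(p)})v+Q^{(p)}\bigl(I-\prod_{q\ne p}Q^{(q)}\bigr)v$. It then uses that the telescoping terms of commuting orthogonal projections are mutually orthogonal. This gives $\|\nabla(v-\Pi_hv)\|_{L^2}\le(\sqrt2\gamma/\pi)h\|D^2v\|_{L^2}$ with no dependence on $d$.

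Second, your duality argument in the $a$-norm is not uniform in $c$. Best approximation in $\|\cdot\|_a$ brings in $\sqrt c\,\|u-I_hu\|_{L^2}$, which is bounded only by $\sqrt c\,h^2\|D^2u\|_{L^2}$. That is not $O(h)\|D^2u\|_{L^2}$ once $ch^2\gg1$, so the product bound degrades for large $c$. Your alternatives are either unspecified, or they replace $\|u\|_{H^2}$ by $\|f\|_{L^2}$, which changes the statement.

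The paper removes the reaction term from the duality entirely. It runs Aubin--Nitsche for the Poisson Ritz projection $w_h$ of $u$, with dual problem $-\Delta z=u-w_h$ and $\|D^2z\|_{L^2}=\|u-w_h\|_{L^2}$. This gives $\|u-w_h\|_{L^2}\le(2\gamma^2/\pi^2)h^2\|D^2u\|_{L^2}$. It then observes that $a(u_h-w_h,v_h)=c(u-w_h,v_h)$ for all $v_h\in V_h$. Testing with $u_h-w_h$ yields $\|u_h-w_h\|_{L^2}\le\|u-w_h\|_{L^2}$ for every $c\ge0$. Combined with $\|D^2u\|_{L^2}\le\sqrt2\|u\|_{H^2}$, this gives the constant $4\sqrt2\gamma^2/\pi^2\approx2.53<3$.

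Your proposal never computes a constant, and deferring to ``the appendix computation'' is not an argument.
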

The proof in Appendix~\ref{app:feconstant} is included solely to give an explicit upper bound for $C_{\mathrm{FE}}$.

\subsection{BPX preconditioner for Q1 finite elements}

The classical BPX preconditioner combines level corrections additively \cite{BramblePasciakXu1990}; its analysis fits the subspace-correction framework of \cite{Xu1992SubspaceCorrection}. Here we use this multilevel structure with mass normalization and reaction-dependent weights.

Let $P_\ell\in\mathbb R^{N\times(2^\ell-1)^d}$ be the nodal prolongation from level $\ell$ to the finest level $L$. It represents the inclusion $V_\ell\subset V_L$: multiplying a coarse nodal vector by $P_\ell$ gives the fine nodal vector of the same function. In particular, $P_L=I$.

To balance the diffusion scale $h_\ell^{-2}$ and the reaction scale $c$, define
\begin{equation}\label{bpxfactor}
\begin{aligned}
\omega_\ell&=(h_\ell^{-2}+c)^{-1/2},\qquad
S_\ell=\omega_\ell P_\ell M_\ell^{-1/2},\\
S&=[S_1,\ \cdots,\ S_L],\qquad
B=SS^T=\sum_{\ell=1}^L\omega_\ell^2P_\ell M_\ell^{-1}P_\ell^T.
\end{aligned}
\end{equation}
The Poisson BPX constructions in \cite[Section~5.2]{DeimlPeterseim2025QFEM} and \cite[Section~3.3]{JinLiuMaYu2026BPXrevised} use, in our notation, the level factor $h_\ell^{1-d/2}P_\ell=h_\ell P_\ell(h_\ell^{-d/2}I)$. Here $h_\ell^{-d/2}$ supplies the mesh scaling associated with the $L^2$ norm. In \eqref{bpxfactor}, we use the symmetric inverse mass square root $M_\ell^{-1/2}$ in place of $h_\ell^{-d/2}I$ to normalize the level coefficients with respect to the $L^2$ inner product. In fact, for $v_\ell\in V_\ell$ with nodal coefficient vector $\bm{v}_\ell$, define the normalized coefficient vector $\bm{z}_\ell=M_\ell^{1/2}\bm{v}_\ell$. Then
\[
\|v_\ell\|_{L^2}^2=\bm{v}_\ell^TM_\ell\bm{v}_\ell=\|\bm{z}_\ell\|_2^2.
\]
Thus continuous $L^2$ errors on level $\ell$ correspond to Euclidean errors in the normalized coefficients $\bm{z}_\ell$, whereas errors in the nodal coefficients $\bm{v}_\ell$ are measured in the mass-matrix norm.

The weight $\omega_\ell$ behaves like $h_\ell$ when $ch_\ell^2\ll1$, and like $c^{-1/2}$ when $ch_\ell^2\gg1$. Thus the same factor controls both terms of the energy. When $c=0$, it reduces to $S_\ell=h_\ell P_\ell M_\ell^{-1/2}$.
The preconditioner $B$ is related to the BPX preconditioners in \cite{DeimlPeterseim2025QFEM,JinLiuMaYu2026BPXrevised}, extended here with the reaction-dependent weights $\omega_\ell$, as follows:
\[
\sum_{\ell=1}^L\omega_\ell^2h_\ell^{-d}P_\ell P_\ell^T
\preceq B\preceq
3^d\sum_{\ell=1}^L\omega_\ell^2h_\ell^{-d}P_\ell P_\ell^T.
\]

The prolongation matrices admit an explicit tensor-product construction. We first describe one refinement step in one dimension.

Let $v$ be a continuous piecewise linear function on the one-dimensional level-$\ell$ mesh, with zero boundary values. Let $\bm{v}=(v_1,\cdots,v_{2^\ell-1})^T$ contain its interior nodal values, where $v_j=v(x_j^{(\ell)})$ and $v_0=v_{2^\ell}=0$. Refinement inserts the midpoint of every coarse interval, so $h_{\ell+1}=h_\ell/2$. The prolongation matrix
\[
P_{\ell\to\ell+1}^{\mathrm{1d}}
\in\mathbb R^{(2^{\ell+1}-1)\times(2^\ell-1)}
\]
converts $\bm{v}$ into the vector $\bm{w}$ of values of the same function at the fine interior nodes: $\bm{w}=P_{\ell\to\ell+1}^{\mathrm{1d}}\bm{v}$, with $w_i=v(x_i^{(\ell+1)})$. Its rows correspond to fine interior nodes and its columns to coarse interior nodes, both ordered from left to right.

An even fine index corresponds to an existing coarse node, since $x_{2j}^{(\ell+1)}=x_j^{(\ell)}$. An odd fine index corresponds to the midpoint of a coarse interval, where linearity gives the average of its endpoint values. Thus
\[
\begin{aligned}
w_{2j}&=v_j, &&1\le j\le2^\ell-1,\\
w_{2j+1}&=\frac{v_j+v_{j+1}}2, &&0\le j\le2^\ell-1.
\end{aligned}
\]
For example, refinement from level $2$ to level $3$ maps three interior nodal values to seven:
\[
\begin{bmatrix}w_1\\w_2\\w_3\\w_4\\w_5\\w_6\\w_7\end{bmatrix}
=P_{2\to3}^{\mathrm{1d}}\begin{bmatrix}v_1\\v_2\\v_3\end{bmatrix}
=\begin{bmatrix}
\tfrac12&0&0\\
1&0&0\\
\tfrac12&\tfrac12&0\\
0&1&0\\
0&\tfrac12&\tfrac12\\
0&0&1\\
0&0&\tfrac12
\end{bmatrix}
\begin{bmatrix}v_1\\v_2\\v_3\end{bmatrix}.
\]
The first and last rows use the zero boundary values. Prolongation changes the nodal representation while preserving the function exactly: $\sum_{j=1}^{2^\ell-1}v_j\phi_j^{(\ell)}(x)
=\sum_{i=1}^{2^{\ell+1}-1}w_i\phi_i^{(\ell+1)}(x)$.
Composing these successive refinements gives
\[P_\ell^{\mathrm{1d}}=P_{L-1\to L}^{\mathrm{1d}}\cdots P_{\ell\to\ell+1}^{\mathrm{1d}}, \qquad P_\ell=(P_\ell^{\mathrm{1d}})^{\otimes d}.\]
Then $P_\ell$ represents the inclusion $V_\ell\subset V_L$, so
\[
P_\ell^TA_{h,d}P_\ell=A_\ell,\qquad P_\ell^TM_{h,d}P_\ell=M_\ell,\qquad P_\ell^TK_{h,d}P_\ell=K_\ell.
\]

The classical left-preconditioned system is $BK_{h,d}\bm{u}_h=B\bm{f}_{h,d}$. For a symmetric quantum encoding, introduce multilevel coordinates $\bm{z}=(\bm{z}_1^T,\cdots,\bm{z}_L^T)^T$ and solve
\begin{equation}\label{bpxsystem}
A_S\bm{z}=S^T\bm{f}_{h,d},\qquad
A_S=S^TK_{h,d}S,\qquad \bm{u}_h=S\bm{z}.
\end{equation}
In particular, each level equation is
\begin{equation}\label{bpxlevelsystem}
\sum_{\ell'=1}^L \omega_\ell\omega_{\ell'}M_\ell^{-1/2}P_\ell^TK_{h,d}P_{\ell'}M_{\ell'}^{-1/2}\bm{z}_{\ell'}
=\omega_\ell M_\ell^{-1/2}P_\ell^T\bm{f}_{h,d}.
\end{equation}
The off-diagonal blocks in \eqref{bpxlevelsystem} couple corrections on different levels. The matrix $A_S$ has order $\sum_{\ell=1}^L(2^\ell-1)^d$. We embed its coordinate space in $\lceil\log_2L\rceil+dL$ qubits: the level is stored as $\ket{\ell-1}$, and each spatial register is padded to $L$ qubits. Inactive level labels and spatial indices correspond to zero matrix blocks.

Since $S_L=\omega_LM_L^{-1/2}$ is invertible, $S$ has full row rank. However, its columns are redundant when $L>1$, and $\ker A_S=\ker S$. The right-hand side of \eqref{bpxsystem} belongs to $\range(A_S)=\range(S^T)$, so the system is compatible. Its minimum-norm solution is
\begin{equation}\label{bpxinverseidentity}
\bm{z}=A_S^+S^T\bm{f}_{h,d},\qquad
SA_S^+S^T=K_{h,d}^{-1},
\end{equation}
where $A_S^+$ is the Moore--Penrose pseudoinverse. All solutions of the redundant system recover the same nodal solution, because differences between them lie in $\ker S$.

The following spectral bounds follow from an $L^2$ projection decomposition and the finite element inverse estimate. Their explicit dependence on $d$ and $L$ will determine the inverse-query bound.

\begin{theorem}[Spectral bounds for the preconditioned matrix]\label{thm:bpxspectrum}
For every $c\ge0$, every nonzero eigenvalue of $A_S$ satisfies
\begin{equation}\label{bpxspectralbound}
\frac1L\le\lambda(A_S)\le12dL.
\end{equation}
The effective condition number, defined on $\range(A_S)$, is consequently at most $12dL^2$, uniformly in $c$. These nonzero eigenvalues are also the eigenvalues of $B^{1/2}K_{h,d}B^{1/2}$.
\end{theorem}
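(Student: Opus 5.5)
The plan is to reduce the statement to two Rayleigh-quotient bounds for $B=SS^T$, and then prove those bounds by the standard additive subspace-correction argument, with explicit constants.

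\textbf{Reduction.} The nonzero eigenvalues of $A_S=S^TK_{h,d}S=(K_{h,d}^{1/2}S)^T(K_{h,d}^{1/2}S)$ coincide with those of $K_{h,d}^{1/2}SS^TK_{h,d}^{1/2}=K_{h,d}^{1/2}BK_{h,d}^{1/2}$. Since $S$ has full row rank, $B$ is positive definite. Hence $K_{h,d}^{1/2}BK_{h,d}^{1/2}$ is similar to $B^{1/2}K_{h,d}B^{1/2}$, which gives the last assertion. Its spectrum is therefore described by the Rayleigh quotient $\bm{v}^TK_{h,d}\bm{v}/\bm{v}^TB^{-1}\bm{v}$. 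The usual lemma for $B=SS^T$ gives
\[
\bm{v}^TB^{-1}\bm{v}=\min\Big\{\|\bm{z}\|_2^2:S\bm{z}=\bm{v}\Big\}.
\]
Writing $v_\ell\in V_\ell$ for the function with normalized coefficients $\bm{z}_\ell=M_\ell^{1/2}\bm{v}_\ell$, and using $\|\bm{z}_\ell\|_2=\|v_\ell\|_{L^2}$, this becomes
\[
\bm{v}^TB^{-1}\bm{v}=\min_{v=\sum_\ell v_\ell,\ v_\ell\in V_\ell}\ \sum_{\ell=1}^L\omega_\ell^{-2}\|v_\ell\|_{L^2}^2 .
\]
So it suffices to prove, for every $v\in V_h$,
\[
\frac1L\,\bm{v}^TB^{-1}\bm{v}\ \le\ a(v,v)\ \le\ 12dL\,\bm{v}^TB^{-1}\bm{v}.
\]
The ratio of the two constants then gives the effective condition number $12dL^2$.

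\textbf{Upper bound.} Take any decomposition $v=\sum_\ell v_\ell$. The Cauchy--Schwarz inequality in the $a$-inner product gives $a(v,v)\le L\sum_\ell a(v_\ell,v_\ell)$. On level $\ell$, apply the generalized spectrum \eqref{generalizedspectrum} with $h$ replaced by $h_\ell$ (the same sine-vector computation works there). This gives
\[
a(v_\ell,v_\ell)\le (c+12d\,h_\ell^{-2})\|v_\ell\|_{L^2}^2\le 12d\,\omega_\ell^{-2}\|v_\ell\|_{L^2}^2,
\]
using $d\ge1$. Minimizing over decompositions yields $a(v,v)\le 12dL\,\bm{v}^TB^{-1}\bm{v}$.

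\textbf{Lower bound.} Use the $L^2$-projection decomposition $v_\ell=(Q_\ell-Q_{\ell-1})v$, where $Q_\ell$ is the $L^2$-orthogonal projection onto $V_\ell$, $Q_0=0$, and $Q_L v=v$. These pieces are $L^2$-orthogonal, so the reaction part gives exactly $\sum_\ell c\|v_\ell\|^2=c\|v\|^2$. For the diffusion part, I need
\[
h_\ell^{-2}\|(Q_\ell-Q_{\ell-1})v\|^2\le |v|_{H^1}^2 \quad\text{for each }\ell .
\]
Summing then gives $\sum_\ell\omega_\ell^{-2}\|v_\ell\|^2\le L|v|_{H^1}^2+c\|v\|^2\le L\,a(v,v)$.

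\textbf{Main obstacle.} The hardest step is proving the per-level estimate with the sharp enough constant. I would bound $\|(Q_\ell-Q_{\ell-1})v\|\le\|(I-Q_{\ell-1})v\|$.

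For $\ell=1$, use the Poincaré inequality $\|v\|\le (\pi\sqrt d)^{-1}|v|_{H^1}$, which suffices because $h_1^2=1/4>1/\pi^2$.

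For $\ell\ge2$, exploit the tensor structure $Q_k=(Q_k^{\mathrm{1d}})^{\otimes d}$. Telescope
\[
I-Q_k=\sum_{q=1}^d (Q_k^{\mathrm{1d}})^{\otimes(q-1)}\otimes(I-Q_k^{\mathrm{1d}})\otimes I^{\otimes(d-q)} .
\]
These terms are mutually $L^2$-orthogonal, since the $q$-th factor is $I-Q$ in one term and $Q$ in every later term. Each term's norm is bounded by the one-dimensional estimate $\|(I-Q_k^{\mathrm{1d}})w\|\le (h_k/\pi)\|w'\|$ applied in direction $q$. I would prove that estimate by comparing with the error of the elementwise linear interpolant, which satisfies $\|w-I_kw\|\le (h_k/\pi)\|w'\|$ via elementwise Poincaré with zero endpoint values. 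The remaining factors are contractions in $L^2$. This gives $\|(I-Q_{\ell-1})v\|^2\le (h_{\ell-1}/\pi)^2|v|_{H^1}^2=(4/\pi^2)h_\ell^2|v|_{H^1}^2\le h_\ell^2|v|_{H^1}^2$, as required.
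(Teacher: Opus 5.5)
Your proposal is correct and follows essentially the same route as the paper: you identify the nonzero spectra of $S^TK_{h,d}S$, $K_{h,d}^{1/2}BK_{h,d}^{1/2}$ and $B^{1/2}K_{h,d}B^{1/2}$, and you combine the minimum-norm identity for $\bm{v}^TB^{-1}\bm{v}$ with the $L^2$-projection decomposition for the lower bound (Poincar\'e on level $1$, and $\|(I-Q_{\ell-1})v\|_{L^2}\le (h_{\ell-1}/\pi)\|\nabla v\|_{L^2}$ on finer levels); for the upper bound you use the level-wise inverse estimate and Cauchy--Schwarz over the $L$ levels, and the only difference is that you derive the projection estimate yourself via tensor telescoping and the nodal interpolant, where the paper cites it from the literature. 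Two small fixes: the coefficients should be $\bm{z}_\ell=\omega_\ell^{-1}M_\ell^{1/2}\bm{v}_\ell$ so that $S\bm{z}=\bm{v}$ matches your displayed identity, and the interpolant comparison also needs $\|(w-I_kw)'\|_{L^2(I_i)}\le\|w'\|_{L^2(I_i)}$, which holds because $(I_kw)'$ is the cellwise mean of $w'$.
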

\begin{proof}
Let $Q_\ell$ be the $L^2$-orthogonal projection onto $V_\ell$ and set $Q_0=0$. The one-dimensional approximation estimate in \cite[Theorem~4]{SandeManniSpeleers2020}, together with the best-approximation property and tensor-product orthogonality of the $L^2$ projection, yields
\begin{equation}\label{projectionestimate}
\|v-Q_\ell v\|_{L^2}\le\frac{h_\ell}{\pi}\|\nabla v\|_{L^2},\qquad v\in H_0^1(\Omega).
\end{equation}
For $v\in V_L$, put $v_\ell=(Q_\ell-Q_{\ell-1})v$. Then $v=\sum_{\ell=1}^Lv_\ell$ and orthogonality gives $\sum_{\ell=1}^L\|v_\ell\|_{L^2}^2=\|v\|_{L^2}^2$. For $\ell\ge2$, nested orthogonal projections and \eqref{projectionestimate} give
\[
h_\ell^{-2}\|v_\ell\|_{L^2}^2\le\frac4{\pi^2}\|\nabla v\|_{L^2}^2.
\]
For $\ell=1$, the Poincar\'e inequality on the cube gives
$h_1^{-2}\|Q_1v\|_{L^2}^2\le4\|\nabla v\|_{L^2}^2/(d\pi^2)$. Hence
\begin{equation}\label{stabledecomposition}
\begin{aligned}
\sum_{\ell=1}^L\omega_\ell^{-2}\|v_\ell\|_{L^2}^2
&=\sum_{\ell=1}^L h_\ell^{-2}\|v_\ell\|_{L^2}^2+c\|v\|_{L^2}^2\\
&\le\frac{4L}{\pi^2}\|\nabla v\|_{L^2}^2+c\|v\|_{L^2}^2 
 \le L\bigl(\|\nabla v\|_{L^2}^2+c\|v\|_{L^2}^2\bigr).
\end{aligned}
\end{equation}
If $\bm{v}_\ell$ is the level-$\ell$ nodal vector of $v_\ell$, choose $\bm{z}_\ell=\omega_\ell^{-1}M_\ell^{1/2}\bm{v}_\ell$. Then $S\bm{z}=\bm{v}$, where $\bm{v}$ is the fine nodal vector of $v$, and $\|\bm{z}\|_2^2$ equals the sum in \eqref{stabledecomposition}. The minimum-norm identity
\[
\bm{v}^TB^{-1}\bm{v}=\min_{S\bm{z}=\bm{v}}\|\bm{z}\|_2^2
\]
therefore implies $B^{-1}\preceq LK_{h,d}$ and proves the lower eigenvalue bound.

For the upper bound, the diffusion part of \eqref{generalizedspectrum} at level $\ell$ gives
$\|\nabla v_\ell\|_{L^2}^2\le12d h_\ell^{-2}\|v_\ell\|_{L^2}^2$. Consequently,
\[
\|K_{h,d}^{1/2}S_\ell\|^2
=\omega_\ell^2\|M_\ell^{-1/2}K_\ell M_\ell^{-1/2}\|
\le\frac{12d\,h_\ell^{-2}+c}{h_\ell^{-2}+c}\le12d.
\]
Cauchy--Schwarz over the $L$ level blocks yields $\|K_{h,d}^{1/2}S\|^2\le12dL$. Finally, the nonzero spectra of $S^TK_{h,d}S$ and $K_{h,d}^{1/2}BK_{h,d}^{1/2}$ coincide. The latter is similar to $BK_{h,d}$, as is $B^{1/2}K_{h,d}B^{1/2}$, which completes the proof.
\end{proof}

The estimate $12dL^2$ is sufficient for the analysis below, although its dependence on the number of levels is not asserted to be optimal. It is uniform in $c$ and depends on the mesh through $L=\log_2(1/h)$.

\section{Block encoding through local value and gradient coordinates}\label{sec:encoding}

We construct the block encoding through Gram factorizations of the mass and stiffness matrices, following the strategies in \cite{DeimlPeterseim2025QFEM,JinLiuMaYu2026BPXrevised}. Local $L^2$-orthonormal coordinates convert function inner products into Euclidean coefficient products and make interlevel transfer isometric. We first construct these maps, then combine them with the inverse mass square roots and BPX weights. 

\subsection{Block-encoding preliminaries}\label{subsec:encodingconvention}

Following \cite[Section~4.1]{JinLiuMaYu2026BPXrevised}, we first recall block encodings of square matrices and then extend the definition to the rectangular maps used below. All matrix errors in this subsection are measured in the operator norm.

\begin{definition}[Block encoding]\label{def:blockencoding}
Let $D\in\mathbb C^{N\times N}$, and let $\widehat D\in\mathbb C^{2^n\times2^n}$ be its zero-padded extension, where $n=\lceil\log_2N\rceil$. A unitary $U_D$ acting on $a+n$ qubits is an $(\alpha_D,a,\epsilon_D)$-block encoding of $D$ if $\alpha_D\ge\|D\|$, $\alpha_D>0$, and
\[
\Big\|\widehat D-\alpha_D
(\bra{0}^{\otimes a}\otimes I_{2^n})U_D
(\ket{0}^{\otimes a}\otimes I_{2^n})\Big\|\le\epsilon_D.
\]
The encoding is exact when $\epsilon_D=0$.
\end{definition}

The value, gradient, and interlevel transfer matrices generally have different input and output dimensions. We therefore use generalized block encodings, which allow these two spaces to be embedded separately in the circuit register \cite{Gilyen2019QSVD,DeimlPeterseim2025QFEM}.

\begin{definition}[Generalized block encoding]\label{def:generalizedencoding}
Let $D\in\mathbb C^{N_{\mathrm{out}}\times N_{\mathrm{in}}}$, and let
\[
\Pi_{\mathrm{in}}:\mathbb C^{N_{\mathrm{in}}}\longrightarrow\mathbb C^{2^m},
\qquad
\Pi_{\mathrm{out}}:\mathbb C^{N_{\mathrm{out}}}\longrightarrow\mathbb C^{2^m}
\]
be isometries, so that $\Pi_{\mathrm{in}}^\dagger\Pi_{\mathrm{in}}=I_{N_{\mathrm{in}}}$ and $\Pi_{\mathrm{out}}^\dagger\Pi_{\mathrm{out}}=I_{N_{\mathrm{out}}}$. A unitary $U_D$ on $m$ qubits is a generalized $(\alpha_D,\epsilon_D)$-block encoding of $D$ if $\alpha_D\ge\|D\|$, $\alpha_D>0$, and
\[
\Big\|D-\alpha_D\Pi_{\mathrm{out}}^\dagger U_D\Pi_{\mathrm{in}}\Big\|\le\epsilon_D.
\]
For an exact encoding, this reduces to
\[
\Pi_{\mathrm{out}}^\dagger U_D\Pi_{\mathrm{in}}=D/\alpha_D.
\]
\end{definition}

Here the isometries initialize auxiliary registers and embed the active indices into qubit registers; the corresponding orthogonal projectors are $\Pi_{\mathrm{in}}\Pi_{\mathrm{in}}^\dagger$ and $\Pi_{\mathrm{out}}\Pi_{\mathrm{out}}^\dagger$. For an exact encoding and a unit input vector $\bm{x}$, projecting $U_D\Pi_{\mathrm{in}}\bm{x}$ onto the output subspace gives $\Pi_{\mathrm{out}}D\bm{x}/\alpha_D$, with success probability $\|D\bm{x}\|_2^2/\alpha_D^2$. Thus the normalization affects the cost of using the encoded matrix. We describe the constructions in the ideal arithmetic model and account for finite precision in Section~\ref{sec:accuracy}.

The following rules are standard consequences of generalized block-encoding arithmetic \cite{Gilyen2019QSVD}.
\begin{lemma}[Matrix arithmetic for generalized block-encodings]\label{lem:arithmeticBE}
Let $U_1$ be a generalized $(\gamma_1,\varepsilon_1)$-block-encoding of $S_1$ with respect to $(\Pi_0,\Pi_1)$, and let $U_2$ be a generalized $(\gamma_2,\varepsilon_2)$-block-encoding of $S_2$ with respect to $(\Pi_1,\Pi_2)$. Assume that the embeddings can be implemented. Then:
\begin{enumerate}
\item \textbf{Adjoint.} $U_1^\dagger$ is a generalized $(\gamma_1,\varepsilon_1)$-block-encoding of $S_1^\dagger$ with respect to $(\Pi_1,\Pi_0)$.
\item \textbf{Product.} Composing the encodings with separate flags and the intermediate projection gives a generalized $(\gamma_1\gamma_2,\gamma_2\varepsilon_1+\gamma_1\varepsilon_2)$-block-encoding of $S_2S_1$ with respect to $(\Pi_0,\Pi_2)$.
\item \textbf{Tensor product.} Let $U'_1$ be a generalized $(\gamma'_1,\varepsilon'_1)$-block-encoding of $S'_1$ with respect to $(\Pi'_0,\Pi'_1)$, acting on a disjoint register. Then $U_1\otimes U'_1$ is a generalized $(\gamma_1\gamma'_1,\gamma'_1\varepsilon_1+\gamma_1\varepsilon'_1)$-block-encoding of $S_1\otimes S'_1$ with respect to $(\Pi_0\otimes\Pi'_0,\Pi_1\otimes\Pi'_1)$.
\item \textbf{Direct sum.} If $U_j$ encodes $S_j$ with parameters $(\gamma,\varepsilon_j)$ for every $j$, then $\sum_j\ket{j}\bra{j}\otimes U_j$ is a generalized $(\gamma,\max_j\varepsilon_j)$-block-encoding of $\bigoplus_jS_j$ with the corresponding direct-sum embeddings.
\item \textbf{Concatenation.} Let $U_A$ and $U_B$ be generalized $(\gamma_A,\varepsilon_A)$- and $(\gamma_B,\varepsilon_B)$-block-encodings of $A$ and $B$. With a common output embedding, $[A\ B]$ admits a generalized $(\sqrt{\gamma_A^2+\gamma_B^2},\sqrt{\varepsilon_A^2+\varepsilon_B^2})$-block-encoding. With a common input embedding, the same parameters apply to $[A^\dagger\ B^\dagger]^\dagger$. For several blocks, the parameters are the square roots of the corresponding sums of squares.
\end{enumerate}
\end{lemma}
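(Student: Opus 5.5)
The plan is to verify each of the five rules directly from Definition~\ref{def:generalizedencoding}. For each rule we compute the compressed block $\Pi_{\mathrm{out}}^\dagger U\Pi_{\mathrm{in}}$ of the proposed circuit, identify it as a product, tensor product, or direct sum of the individual compressed blocks, and bound the error with the triangle inequality. Throughout, write $\widetilde S_j:=\gamma_j\Pi_j^\dagger U_j\Pi_{j-1}$. Then $\|S_j-\widetilde S_j\|\le\varepsilon_j$, and since $U_j$ is unitary and the $\Pi$'s are isometries, $\|\widetilde S_j\|\le\gamma_j$.

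\textbf{Adjoint.} Taking the conjugate transpose gives $(\Pi_1^\dagger U_1\Pi_0)^\dagger=\Pi_0^\dagger U_1^\dagger\Pi_1$. The operator norm is invariant under adjoints, so the error is unchanged, and $\gamma_1\ge\|S_1\|=\|S_1^\dagger\|$.

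\textbf{Product.} Apply $U_1$, then $U_2$, on registers arranged so that the intermediate projector $\Pi_1\Pi_1^\dagger$ is enforced by fresh flag qubits. The compressed block is then exactly $\Pi_2^\dagger U_2\Pi_1\Pi_1^\dagger U_1\Pi_0=\widetilde S_2\widetilde S_1/(\gamma_1\gamma_2)$. Making this identity precise is the main technical point: the flags are needed so that components outside $\operatorname{range}\Pi_1$ are not re-mapped into the output subspace. I would realize it via the standard construction of \cite{Gilyen2019QSVD}, in which each encoding uses its own ancilla register initialized in $\ket0$, and $\Pi_1$ is realized as "ancilla of $U_1$ equal to $0$." For the error we use the splitting
\[
S_2S_1-\widetilde S_2\widetilde S_1=(S_2-\widetilde S_2)S_1+\widetilde S_2(S_1-\widetilde S_1).
\]
Its norm is at most $\varepsilon_2\|S_1\|+\gamma_2\varepsilon_1\le\gamma_1\varepsilon_2+\gamma_2\varepsilon_1$. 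Submultiplicativity gives $\gamma_1\gamma_2\ge\|S_2S_1\|$.

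\textbf{Tensor product.} We have $(\Pi_1\otimes\Pi_1')^\dagger(U_1\otimes U_1')(\Pi_0\otimes\Pi_0')=\widetilde S_1\otimes\widetilde S_1'/(\gamma_1\gamma_1')$. The error follows from the analogous splitting $(S_1-\widetilde S_1)\otimes S_1'+\widetilde S_1\otimes(S_1'-\widetilde S_1')$ together with $\|X\otimes Y\|=\|X\|\|Y\|$.

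\textbf{Direct sum.} The controlled unitary $\sum_j\ket j\bra j\otimes U_j$, compressed by $\bigoplus_j\Pi_{\mathrm{in},j}$ and $\bigoplus_j\Pi_{\mathrm{out},j}$, gives $\bigoplus_j\widetilde S_j/\gamma$. The norm of a block-diagonal operator is the maximum of the block norms, which yields both $\gamma\ge\|\bigoplus_j S_j\|$ and the error $\max_j\varepsilon_j$.

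\textbf{Concatenation.} Prepare a one-qubit selector with the rotation $V\ket0=(\gamma_A\ket0+\gamma_B\ket1)/\gamma$, where $\gamma=\sqrt{\gamma_A^2+\gamma_B^2}$. The input embedding sends $(\bm x,\bm y)$ to $\ket0\Pi_A\bm x+\ket1\Pi_B\bm y$. Apply the controlled unitary $\ket0\bra0\otimes U_A+\ket1\bra1\otimes U_B$, then $V^\dagger$ on the selector, and project onto selector $\ket0$ and the common output embedding. The resulting block is $(\gamma_A\,\Pi^\dagger U_A\Pi_A,\ \gamma_B\,\Pi^\dagger U_B\Pi_B)/\gamma=[\widetilde A\ \widetilde B]/\gamma$. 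For the error, $\|[E_A\ E_B]\|^2=\|E_AE_A^\dagger+E_BE_B^\dagger\|\le\varepsilon_A^2+\varepsilon_B^2$, and similarly $\|[A\ B]\|\le\sqrt{\|A\|^2+\|B\|^2}\le\gamma$. The stacked case $[A^\dagger\ B^\dagger]^\dagger$ follows by applying the adjoint rule to the concatenation of $A^\dagger$ and $B^\dagger$. The case of several blocks follows by induction, or by using a multi-qubit selector state with amplitudes proportional to $\gamma_j$.
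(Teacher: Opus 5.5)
Your rule-by-rule verification is correct, including the normalization checks $\gamma\ge\|\cdot\|$ and the error splittings. The paper gives no argument of its own for this lemma; it only calls the five rules standard consequences of \cite{Gilyen2019QSVD}. Your proposal is therefore the explicit derivation behind that citation rather than a competing route, and it shows how each rule works with rectangular blocks and distinct input and output embeddings, as Definition~\ref{def:generalizedencoding} requires.

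One point in the product rule needs tightening. The mechanism you state first is the right one. With $X$ the bit flip on a fresh flag, insert the unitary $W=\Pi_1\Pi_1^\dagger\otimes I+(I-\Pi_1\Pi_1^\dagger)\otimes X$ between $U_1\otimes I$ and $U_2\otimes I$. This gives exactly $(\Pi_2^\dagger\otimes\bra0)(U_2\otimes I)W(U_1\otimes I)(\Pi_0\otimes\ket0)=\Pi_2^\dagger U_2\Pi_1\Pi_1^\dagger U_1\Pi_0$.

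The realization you then propose identifies $\range\Pi_1$ with ``ancilla of $U_1$ equal to $0$.'' That is valid only when the flag-zero output of $U_1$ already lies in $\range\Pi_1$, as for the zero-padded encodings of Definition~\ref{def:blockencoding}. Generalized embeddings can leave some index states inactive, for instance the unused $\ket0$ of the interior-node registers in Section~\ref{sec:encoding}. Then $U_2$ could carry such components into $\range\Pi_2$, so the flag must test the full projector $\Pi_1\Pi_1^\dagger$. The lemma's hypothesis that the embeddings can be implemented is what permits this test.
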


Applying the product rule directly to separate encodings of $S^T$, $K_{h,d}$, and $S$ produces an encoding of $A_S=S^TK_{h,d}S$ with normalization
\begin{equation}\label{directproductnormalization}
\alpha_{A_S}^{\mathrm{prod}}
=\alpha_S^2\alpha_{K_{h,d}}
\ge\|S\|^2\|K_{h,d}\|
=\|B\|\,\|K_{h,d}\|.
\end{equation}
The product normalization depends on the separate factors and can be much larger than the norm of the preconditioned matrix. Indeed, Theorem~\ref{thm:bpxspectrum} gives $B\succeq L^{-1}K_{h,d}^{-1}$, and hence
\[
\alpha_{A_S}^{\mathrm{prod}}\ge\frac{\kappa_2(K_{h,d})}{L},\qquad
\kappa_2(K_{h,d})=\|K_{h,d}\|\,\|K_{h,d}^{-1}\|.
\]
For fixed $d$ and $c$, the sine eigenvalues imply $\kappa_2(K_{h,d})=\Theta(h^{-2})$ under refinement. Thus even optimally normalized separate factors retain algebraic mesh dependence when combined by the product rule. Since the inverse-query cost depends on the encoding normalization relative to the smallest nonzero eigenvalue, the spectral improvement alone is insufficient.

Following \cite[Section~4.2]{JinLiuMaYu2026BPXrevised}, we instead construct the complete level contributions to the energy map $\mathcal C$ in \eqref{compositemaps}. The commuting identities move differentiation to level $\ell$, where its scale is balanced by $\omega_\ell$, before transferring the coefficients to the finest mesh. The Gram product $A_S=\mathcal C^T\mathcal C$ then has normalization $12dL$, as shown in Theorem~\ref{thm:encoding}.

The construction below proceeds from local value and gradient matrices to interlevel transfers, normalized level encodings, and finally the BPX system.

\subsection{The value and gradient matrices}

Following the Gram-factorization strategy used for the finite difference operator in \cite[Section~4.2]{JinLiuMaYu2026BPXrevised}, we seek factors of the finite element mass and stiffness matrices. We begin in one dimension and aim to construct the matrices in Lemma~\ref{lem:bpx1dfactors} so that
\[
M_{h_\ell}=(E_\ell^{\mathrm{1d}})^TE_\ell^{\mathrm{1d}},
\qquad
A_{h_\ell}=(C_\ell^{\mathrm{1d}})^TC_\ell^{\mathrm{1d}}.
\]
For arbitrary nodal vectors $\bm{v},\bm{w}\in\mathbb R^{2^\ell-1}$, let $v_h=\sum_jv_j\phi_j^{(\ell)}$ and $w_h=\sum_jw_j\phi_j^{(\ell)}$. The desired factorizations are equivalent to the identities
\[
\begin{aligned}
(E_\ell^{\mathrm{1d}}\bm{v})^T(E_\ell^{\mathrm{1d}}\bm{w})
 = \bm{v}^T(E_\ell^{\mathrm{1d}})^TE_\ell^{\mathrm{1d}}\bm{w}
 =\bm{v}^TM_{h_\ell}\bm{w}
 =(v_h,w_h)_{L^2(0,1)},\\
(C_\ell^{\mathrm{1d}}\bm{v})^T(C_\ell^{\mathrm{1d}}\bm{w})
 = \bm{v}^T(C_\ell^{\mathrm{1d}})^TC_\ell^{\mathrm{1d}}\bm{w}
 =\bm{v}^TA_{h_\ell}\bm{w}
 = (v_h',w_h')_{L^2(0,1)}.
\end{aligned}
\]
These identities require the coefficient inner products to reproduce the $L^2$ inner products of the functions and their derivatives. An $L^2$-orthonormal basis provides exactly this property. Cellwise constant and linear modes represent both the finite element functions and their derivatives, so they give the required one-dimensional factors. We assemble the local maps and then extend them by tensor products. The resulting coordinates agree with the local representation used in \cite[Section~6.1]{DeimlPeterseim2025QFEM}.

\noindent\textbf{One-dimensional local coefficients.}
Fix a level $\ell$ and let $x_i=ih_\ell$, $i=0,\cdots,2^\ell$. A piecewise linear finite element function $v_h\in V_\ell\subset V_h$ is determined by its interior nodal vector
$\bm{v}=(v_1,\cdots,v_{2^\ell-1})^T$, where $v_j=v_h(x_j)$ and $v_0=v_{2^\ell}=0$ at the boundary. Equivalently, $v_h=\sum_{j=1}^{2^\ell-1}v_j\phi_{j}^{(\ell)}$, where $\phi_{j}^{(\ell)}$ is the piecewise linear nodal basis function associated with $x_j$. On the interval $I_{i}^{(\ell)}=(x_i,x_{i+1})$, write $\varphi=(x-x_i)/h_\ell$. Then
\[
v_h(x)=(1-\varphi)v_i+\varphi v_{i+1},
\qquad v_h'(x)=\frac{v_{i+1}-v_i}{h_\ell}.
\]
To realize the required inner-product identities, choose the following $L^2$-orthonormal basis on each interval:
\begin{equation}\label{bpxlocalbasis}
\psi_{i,0}^{(\ell)}(x)=\frac1{\sqrt{h_\ell}},\qquad
\psi_{i,1}^{(\ell)}(x)=\sqrt{\frac3{h_\ell}}(2\varphi-1),
\qquad x\in I_{i}^{(\ell)},
\end{equation}
extended by zero outside $I_{i}^{(\ell)}$. They satisfy
$\int_{I_{i}^{(\ell)}}\psi_{i,\nu}^{(\ell)}\psi_{i,\mu}^{(\ell)}\,\d x=\delta_{\nu\mu}$.
The identity
$v_h=(v_i+v_{i+1})/2+(v_{i+1}-v_i)(2\varphi-1)/2$ therefore gives the exact local expansions
\[
\begin{aligned}
v_h|_{I_{i}^{(\ell)}}
&=\frac{\sqrt{h_\ell}}2(v_i+v_{i+1})\psi_{i,0}^{(\ell)}
+\sqrt{\frac{h_\ell}{12}}(v_{i+1}-v_i)\psi_{i,1}^{(\ell)},\\
v_h'|_{I_{i}^{(\ell)}}
&=\frac{v_{i+1}-v_i}{\sqrt{h_\ell}}\psi_{i,0}^{(\ell)}.
\end{aligned}
\]
Thus the function has two local coefficients, while its derivative has only a constant coefficient. Define $E_\ell^{\mathrm{1d}}$ and $C_\ell^{\mathrm{1d}}$ to collect these coefficients over all intervals. Their action on interval $i$ is
\begin{equation}\label{bpxlocalmaps}
\begin{aligned}
\begin{bmatrix}(E_\ell^{\mathrm{1d}}\bm{v})_{i,0}\\(E_\ell^{\mathrm{1d}}\bm{v})_{i,1}\end{bmatrix}
&=\begin{bmatrix}
\sqrt{h_\ell}/2&\sqrt{h_\ell}/2\\
-\sqrt{h_\ell/12}&\sqrt{h_\ell/12}
\end{bmatrix}
\begin{bmatrix}v_i\\v_{i+1}\end{bmatrix},\\
\begin{bmatrix}(C_\ell^{\mathrm{1d}}\bm{v})_{i,0}\\(C_\ell^{\mathrm{1d}}\bm{v})_{i,1}\end{bmatrix}
&=\begin{bmatrix}
-1/\sqrt{h_\ell}&1/\sqrt{h_\ell}\\0&0
\end{bmatrix}
\begin{bmatrix}v_i\\v_{i+1}\end{bmatrix}.
\end{aligned}
\end{equation}
The two output rows correspond to the constant and linear modes on the interval. In particular, the zero row of the derivative map records that $v_h'$ is constant there.

\noindent\textbf{Assembly on the one-dimensional mesh.}
For example, stacking the local value maps in \eqref{bpxlocalmaps} for $\ell=2$, with $v_0=v_4=0$, gives
\[
\begin{bmatrix}
(E_2^{\mathrm{1d}}\bm{v})_{0,0}\\
(E_2^{\mathrm{1d}}\bm{v})_{0,1}\\
(E_2^{\mathrm{1d}}\bm{v})_{1,0}\\
(E_2^{\mathrm{1d}}\bm{v})_{1,1}\\
(E_2^{\mathrm{1d}}\bm{v})_{2,0}\\
(E_2^{\mathrm{1d}}\bm{v})_{2,1}\\
(E_2^{\mathrm{1d}}\bm{v})_{3,0}\\
(E_2^{\mathrm{1d}}\bm{v})_{3,1}
\end{bmatrix}
=\sqrt{h_2}
\left[
\begin{array}{ccc}
\tfrac12&0&0\\
\tfrac1{\sqrt{12}}&0&0\\ \hline
\tfrac12&\tfrac12&0\\
-\tfrac1{\sqrt{12}}&\tfrac1{\sqrt{12}}&0\\ \hline
0&\tfrac12&\tfrac12\\
0&-\tfrac1{\sqrt{12}}&\tfrac1{\sqrt{12}}\\ \hline
0&0&\tfrac12\\
0&0&-\tfrac1{\sqrt{12}}
\end{array}
\right]
\begin{bmatrix}v_1\\v_2\\v_3\end{bmatrix}.
\]
Each output row is indexed by $(i,\nu)$: $i$ identifies the interval $I_i^{(\ell)}$, and $\nu=0,1$ identifies the constant or linear mode. The entry of $E_\ell^{\mathrm{1d}}\bm{v}$ is the coefficient of $\psi_{i,\nu}^{(\ell)}$ in $v_h$; the corresponding entry of $C_\ell^{\mathrm{1d}}\bm{v}$ is its coefficient in $v_h'$. Within each interval the constant mode precedes the linear mode, giving the row order $(0,0),(0,1),\cdots,(2^\ell-1,0),(2^\ell-1,1)$. The columns correspond to the interior nodal values. Both matrices have size $2^{\ell+1}\times(2^\ell-1)$, with entries
\begin{equation}\label{bpxmatrixentries}
\begin{aligned}
(E_\ell^{\mathrm{1d}})_{(i,0),j}&=\frac{\sqrt{h_\ell}}2(\delta_{i,j}+\delta_{i+1,j}),\\
(E_\ell^{\mathrm{1d}})_{(i,1),j}&=\sqrt{\frac{h_\ell}{12}}(\delta_{i+1,j}-\delta_{i,j}),\\
(C_\ell^{\mathrm{1d}})_{(i,0),j}&=\frac{\delta_{i+1,j}-\delta_{i,j}}{\sqrt{h_\ell}},
\qquad (C_\ell^{\mathrm{1d}})_{(i,1),j}=0.
\end{aligned}
\end{equation}
Here $\delta_{ij}$ is the Kronecker delta. Since boundary values are fixed to zero, no boundary columns are included.

\begin{lemma}[One-dimensional mass and stiffness factorizations]\label{lem:bpx1dfactors}
For every level $\ell\ge1$, the matrices defined in \eqref{bpxmatrixentries} satisfy
\[
(E_\ell^{\mathrm{1d}})^TE_\ell^{\mathrm{1d}}=M_{h_\ell},\qquad
(C_\ell^{\mathrm{1d}})^TC_\ell^{\mathrm{1d}}=A_{h_\ell}.
\]
In particular, for $v_h=\sum_{j=1}^{2^\ell-1}v_j\phi_j^{(\ell)}$ with nodal coefficient vector $\bm{v}$,
\[
\|E_\ell^{\mathrm{1d}}\bm{v}\|_2^2=\|v_h\|_{L^2(0,1)}^2,
\qquad
\|C_\ell^{\mathrm{1d}}\bm{v}\|_2^2=\|v_h'\|_{L^2(0,1)}^2.
\]
\end{lemma}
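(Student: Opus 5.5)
The plan is to prove the two function-norm identities first and read off the matrix identities by polarization. Working cell by cell is cleanest.

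Fix $\bm{v},\bm{w}$ and let $v_h,w_h$ be the corresponding functions. The local expansions displayed before \eqref{bpxlocalmaps} show that, on each $I_i^{(\ell)}$, the coefficients of $v_h$ with respect to $\psi_{i,0}^{(\ell)},\psi_{i,1}^{(\ell)}$ are the entries $(E_\ell^{\mathrm{1d}}\bm{v})_{i,0}$ and $(E_\ell^{\mathrm{1d}}\bm{v})_{i,1}$. Likewise, the coefficients of $v_h'$ are the entries $(C_\ell^{\mathrm{1d}}\bm{v})_{i,\nu}$. This uses the convention $v_0=v_{2^\ell}=0$, which is exactly why the boundary columns are dropped in \eqref{bpxmatrixentries}. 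The first step is therefore to check that \eqref{bpxmatrixentries} matches the local expansions. This is a direct comparison of the two displays, including the boundary intervals $i=0$ and $i=2^\ell-1$, where only one nodal value is active.

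Next, I will use the local orthonormality $\int_{I_i^{(\ell)}}\psi_{i,\nu}^{(\ell)}\psi_{i,\mu}^{(\ell)}\,\d x=\delta_{\nu\mu}$ and the fact that the intervals partition $(0,1)$ up to nodes. Together these give
\[
(v_h,w_h)_{L^2(0,1)}=\sum_{i}\sum_{\nu=0}^1(E_\ell^{\mathrm{1d}}\bm{v})_{i,\nu}(E_\ell^{\mathrm{1d}}\bm{w})_{i,\nu}=\bm{v}^T(E_\ell^{\mathrm{1d}})^TE_\ell^{\mathrm{1d}}\bm{w},
\]
and similarly for $(v_h',w_h')$ with $C_\ell^{\mathrm{1d}}$. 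The left-hand sides equal $\bm{v}^TM_{h_\ell}\bm{w}$ and $\bm{v}^TA_{h_\ell}\bm{w}$ by the definition of the one-dimensional mass and stiffness matrices, i.e.\ the Gram matrices of $\phi_j^{(\ell)}$ and of their derivatives. Since $\bm{v},\bm{w}$ are arbitrary, the matrix identities follow, and taking $\bm{w}=\bm{v}$ gives the norm statements.

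As an independent check, I will verify the entries against \eqref{onedmatrices}. Column $j$ of $E_\ell^{\mathrm{1d}}$ is supported on intervals $j-1$ and $j$, with entries $(\tfrac{\sqrt{h}}2,\sqrt{h/12})$ and $(\tfrac{\sqrt h}2,-\sqrt{h/12})$. The diagonal entry is therefore $2(h/4+h/12)=2h/3=4h/6$. Columns $j$ and $j+1$ overlap only on interval $j$, which gives $h/4-h/12=h/6$. For $C_\ell^{\mathrm{1d}}$, column $j$ has entries $1/\sqrt h$ and $-1/\sqrt h$, so the diagonal is $2/h$ and the off-diagonal is $-1/h$. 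There is no real obstacle here. The only point needing care is the bookkeeping of the sign conventions and the boundary intervals, to ensure that every interval $i=0,\dots,2^\ell-1$ is counted exactly once.
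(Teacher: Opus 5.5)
Your proposal is correct and follows the paper's proof. The entries of $E_\ell^{\mathrm{1d}}\bm{v}$ and $C_\ell^{\mathrm{1d}}\bm{v}$ are the exact coefficients of $v_h$ and $v_h'$ in the family $\{\psi_{i,\nu}^{(\ell)}\}$, which is orthonormal on each interval and has disjoint supports across intervals, so coefficient inner products reproduce the $L^2$ inner products, and arbitrary $\bm{v},\bm{w}$ give the Gram identities. Your additional entrywise check against \eqref{onedmatrices} (diagonal $4h/6$ and $2/h$, off-diagonal $h/6$ and $-1/h$) is also correct; it confirms directly that the explicit tridiagonal matrices are these Gram matrices, but the argument does not need it.
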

\begin{proof}
The expansions in \eqref{bpxlocalmaps} show that $E_\ell^{\mathrm{1d}}\bm{v}$ and $C_\ell^{\mathrm{1d}}\bm{v}$ are the exact coefficients of $v_h$ and $v_h'$ in $\{\psi_{i,\nu}^{(\ell)}\}_{i,\nu}$. This family is orthonormal in $L^2(0,1)$, since it is orthonormal on each interval and has disjoint supports across intervals. Hence the coefficient inner products equal the function and derivative inner products, as required above. Since $\bm{v}$ and $\bm{w}$ are arbitrary, both Gram factorizations follow. Taking $\bm{w}=\bm{v}$ gives the norm identities.
\end{proof}

\noindent\textbf{Tensor products in $d$ dimensions.}
On the Cartesian grid, the nodal basis and the cellwise orthonormal basis are products of their one-dimensional counterparts:
\[
\phi_{\bm{j}}^{(\ell)}(\bm{x})=\prod_{q=1}^d\phi_{j_q}^{(\ell)}(x_q),
\qquad
\psi_{\bm{m},\bm{\nu}}^{(\ell)}(\bm{x})
=\prod_{q=1}^d\psi_{m_q,\nu_q}^{(\ell)}(x_q).
\]
Here $\bm{j}\in \mathcal{I}_{\ell,d}$, $\bm{m}\in \mathcal{I}_{\ell,d}$, and $\bm{\nu}\in\{0,1\}^d$. The function $\psi_{\bm{m},\bm{\nu}}^{(\ell)}$ is supported on $\prod_{q=1}^d I_{m_q}^{(\ell)}$. These functions form an orthonormal basis of the space $\mathcal Q_\ell$ of cellwise multilinear functions, with no continuity imposed across cell boundaries. There are $2^d$ modes on each of the $2^{\ell d}$ cells. We order the output indices by the coordinate pairs $(m_1,\nu_1),\cdots,(m_d,\nu_d)$.

To represent the function, apply the one-dimensional value map in every coordinate. To represent its $q$th partial derivative, apply the derivative map in coordinate $q$ and the value map in every other coordinate. For example, in two dimensions this gives
\[
E_\ell=E_\ell^{\mathrm{1d}}\otimes E_\ell^{\mathrm{1d}},\qquad
C_\ell=\begin{bmatrix}
C_\ell^{\mathrm{1d}}\otimes E_\ell^{\mathrm{1d}}\\
E_\ell^{\mathrm{1d}}\otimes C_\ell^{\mathrm{1d}}
\end{bmatrix}.
\]
In general, for $1\le q\le d$, define
\begin{equation}\label{bpxmaptensorproducts}
\begin{aligned}
E_\ell&=(E_\ell^{\mathrm{1d}})^{\otimes d},
\qquad
C_\ell=\begin{bmatrix}(C_\ell^{(1)})^T&\cdots&(C_\ell^{(d)})^T\end{bmatrix}^{T},\\[4pt]
C_\ell^{(q)}&=(E_\ell^{\mathrm{1d}})^{\otimes(q-1)}
\otimes C_\ell^{\mathrm{1d}}
\otimes(E_\ell^{\mathrm{1d}})^{\otimes(d-q)}.
\end{aligned}
\end{equation}
Thus $E_\ell\in\mathbb R^{2^{d(\ell+1)}\times(2^\ell-1)^d}$ and
$C_\ell\in\mathbb R^{d\,2^{d(\ell+1)}\times(2^\ell-1)^d}$.
For $v_h=\sum_{\bm{j}}v_{\bm{j}}\phi_{\bm{j}}^{(\ell)}$, these matrices provide the exact expansions
\begin{equation}\label{bpxmapaction}
v_h =\sum_{\bm{m},\bm{\nu}}(E_\ell\bm{v})_{\bm{m},\bm{\nu}}
\psi_{\bm{m},\bm{\nu}}^{(\ell)}, \qquad
\partial_qv_h =\sum_{\bm{m},\bm{\nu}}(C_\ell^{(q)}\bm{v})_{\bm{m},\bm{\nu}}
\psi_{\bm{m},\bm{\nu}}^{(\ell)}.
\end{equation}
Equivalently, the corresponding matrix entries are
$(\phi_{\bm{j}}^{(\ell)},\psi_{\bm{m},\bm{\nu}}^{(\ell)})$ and
$(\partial_q\phi_{\bm{j}}^{(\ell)},\psi_{\bm{m},\bm{\nu}}^{(\ell)})$. The weak derivative is computed cellwise; each $\partial_qv_h$ belongs to $\mathcal Q_\ell$. Orthonormality gives
$\|E_\ell\bm{v}\|_2^2=\|v_h\|_{L^2}^2$ and
$\|C_\ell\bm{v}\|_2^2=\sum_{q=1}^d\|\partial_qv_h\|_{L^2}^2$.
Taking the tensor products of the one-dimensional mass identities in Lemma~\ref{lem:bpx1dfactors} and summing the derivative contributions therefore yields
\begin{equation}\label{bpxenergyfactors}
E_\ell^T E_\ell=M_\ell,\qquad C_\ell^T C_\ell=A_\ell,\qquad K_\ell=C_\ell^TC_\ell+cE_\ell^TE_\ell.
\end{equation}

\subsection{Interlevel transfers and commuting identities}

Every function in $\mathcal Q_\ell$ also belongs to $\mathcal Q_L$, because a multilinear polynomial on a coarse cell remains multilinear when restricted to each of its fine subcells. For a function $w\in\mathcal Q_\ell$, let $\bm{w}_\ell$ and $\bm{w}_L$ denote its coefficient vectors in the orthonormal bases of $\mathcal Q_\ell$ and $\mathcal Q_L$, respectively. We define $T_{\ell\to L}^{(0)}$ as the matrix that converts the coefficients in the coarse-mesh basis into the coefficients of the same function in the fine-mesh basis:
\[
\bm{w}_L=T_{\ell\to L}^{(0)}\bm{w}_\ell.
\]
Both coefficient vectors represent exactly the same function $w$. To represent the gradient of a finite element function on the fine mesh, we apply this coefficient conversion separately to each of its $d$ components. The matrix that performs these $d$ conversions is denoted by $T_{\ell\to L}$. If $\beta$ labels a basis function on the coarse mesh and $\alpha$ labels a basis function on the fine mesh, the two matrices are given by
\[
(T_{\ell\to L}^{(0)})_{\alpha\beta}
=(\psi_{\beta}^{(\ell)},\psi_{\alpha}^{(L)}),
\qquad
T_{\ell\to L}=\diag(\underbrace{T_{\ell\to L}^{(0)},\cdots,T_{\ell\to L}^{(0)}}_{d\text{ blocks}}).
\]
Inclusion, differentiation, and coefficient transfer satisfy a commuting relation. It is the finite element relation underlying \cite[Section~6.1]{DeimlPeterseim2025QFEM}; a corresponding discrete-gradient identity organizes the finite difference construction in \cite{JinLiuMaYu2026BPXrevised}.
\begin{lemma}[Commuting identities for the value and gradient maps]\label{lem:bpxcommuting}
For $1\le \ell\le L$, the transfer maps $T_{\ell\to L}^{(0)}$ and $T_{\ell\to L}$ are isometries, and
\begin{equation}\label{bpxcommuting}
E_LP_\ell=T_{\ell\to L}^{(0)}E_\ell,\qquad
C_LP_\ell=T_{\ell\to L}C_\ell.
\end{equation}
\end{lemma}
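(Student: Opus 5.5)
The argument works at the level of functions rather than matrix entries, using that every matrix involved is a coefficient map with respect to an orthonormal basis. I would first show that $T_{\ell\to L}^{(0)}$ is an isometry. For $\bm{w}_\ell\in\mathbb R^{2^{d(\ell+1)}}$, let $w=\sum_\beta(\bm{w}_\ell)_\beta\psi_\beta^{(\ell)}\in\mathcal Q_\ell$. Since $\mathcal Q_\ell\subset\mathcal Q_L$ and $\{\psi_\alpha^{(L)}\}$ is an orthonormal basis of $\mathcal Q_L$, we have
\[
w=\sum_\alpha(w,\psi_\alpha^{(L)})\psi_\alpha^{(L)}.
\]
By linearity, $(w,\psi_\alpha^{(L)})=\sum_\beta(\psi_\beta^{(\ell)},\psi_\alpha^{(L)})(\bm{w}_\ell)_\beta=(T_{\ell\to L}^{(0)}\bm{w}_\ell)_\alpha$. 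Parseval in $\mathcal Q_L$ and in $\mathcal Q_\ell$ then gives
\[
\|T_{\ell\to L}^{(0)}\bm{w}_\ell\|_2=\|w\|_{L^2}=\|\bm{w}_\ell\|_2 ,
\]
so $(T_{\ell\to L}^{(0)})^TT_{\ell\to L}^{(0)}=I$. The block-diagonal map $T_{\ell\to L}$ is then an isometry blockwise.

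For the value identity, take a coarse nodal vector $\bm{v}$ with function $v_h\in V_\ell$. By the definition of $P_\ell$, the vector $P_\ell\bm{v}$ is the fine nodal vector of the same $v_h$. By \eqref{bpxmapaction} at level $L$, $E_LP_\ell\bm{v}$ is the coefficient vector of $v_h$ in the basis $\{\psi_\alpha^{(L)}\}$. By \eqref{bpxmapaction} at level $\ell$, $E_\ell\bm{v}$ is the coefficient vector of $v_h\in\mathcal Q_\ell$ in $\{\psi_\beta^{(\ell)}\}$. The first step shows that $T^{(0)}_{\ell\to L}$ sends the latter to the fine coefficients of the same function. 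Since coefficients in an orthonormal basis are unique, $E_LP_\ell\bm{v}=T_{\ell\to L}^{(0)}E_\ell\bm{v}$ for all $\bm{v}$.

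For the gradient identity, I would work one component $q$ at a time. By \eqref{bpxmapaction}, $C_L^{(q)}P_\ell\bm{v}$ is the fine coefficient vector of $\partial_qv_h$ computed cellwise on $\mathcal T_L$. Likewise, $C_\ell^{(q)}\bm{v}$ is the coarse coefficient vector of $\partial_qv_h$ computed cellwise on $\mathcal T_\ell$. The only point needing care is that these represent the same $L^2$ function. This holds because $v_h\in H^1_0(\Omega)$: its weak derivative coincides almost everywhere with the cellwise classical derivative on either mesh, since the fine cells refine the coarse ones. Thus $\partial_qv_h\in\mathcal Q_\ell\subset\mathcal Q_L$, and the same uniqueness argument gives $C_L^{(q)}P_\ell=T_{\ell\to L}^{(0)}C_\ell^{(q)}$. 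Stacking over $q$ with the block-diagonal $T_{\ell\to L}$ gives $C_LP_\ell=T_{\ell\to L}C_\ell$.

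I expect no serious obstacle; the main thing to get right is the bookkeeping. In particular, the index ordering of $C_\ell$ (components stacked in $q$) must match the block structure of $T_{\ell\to L}$. Optionally, one could add a tensor-product check, $T_{\ell\to L}^{(0)}=(T^{\mathrm{1d}}_{\ell\to L})^{\otimes d}$ and $P_\ell=(P^{\mathrm{1d}}_\ell)^{\otimes d}$, so that everything reduces to the one-dimensional identities $E^{\mathrm{1d}}_LP^{\mathrm{1d}}_\ell=T^{\mathrm{1d}}E^{\mathrm{1d}}_\ell$ and $C^{\mathrm{1d}}_LP^{\mathrm{1d}}_\ell=T^{\mathrm{1d}}C^{\mathrm{1d}}_\ell$. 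The function-level argument above already covers this case, so the check is not needed.
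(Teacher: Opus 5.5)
Your proof is correct and follows essentially the same route as the paper. Both arguments identify $E_LP_\ell\bm{v}$ and $E_\ell\bm{v}$ (and, componentwise, $C_L^{(q)}P_\ell\bm{v}$ and $C_\ell^{(q)}\bm{v}$) as orthonormal coefficient vectors of the same function $v_h$ (resp.\ $\partial_q v_h\in\mathcal Q_\ell\subset\mathcal Q_L$), convert between them via $(T^{(0)}_{\ell\to L})_{\alpha\beta}=(\psi_\beta^{(\ell)},\psi_\alpha^{(L)})$, and get the isometry from Parseval. The differences are cosmetic: you prove the isometry first and invoke uniqueness of orthonormal coefficients, whereas the paper computes the entries $(v_h,\psi_\alpha^{(L)})$ directly.
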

\begin{proof}
Take an arbitrary coarse nodal vector $\bm{v}$ and let $v_h=\sum_{\bm{j}}v_{\bm{j}}\phi_{\bm{j}}^{(\ell)}$. Since $V_\ell\subset V_L$ and $P_\ell$ is the nodal prolongation, $P_\ell\bm{v}$ represents the same function on the fine mesh:
\[
v_h=\sum_{\bm{i}}(P_\ell\bm{v})_{\bm{i}}\phi_{\bm{i}}^{(L)}.
\]
For each fine basis index $\alpha$, the definition of $T_{\ell\to L}^{(0)}$ and the expansion \eqref{bpxmapaction} give
\[
(E_LP_\ell\bm{v})_\alpha
=(v_h,\psi_{\alpha}^{(L)})
=\sum_\beta(E_\ell\bm{v})_\beta
(\psi_{\beta}^{(\ell)},\psi_{\alpha}^{(L)})
=(T_{\ell\to L}^{(0)}E_\ell\bm{v})_\alpha.
\]
This proves the first identity.

Refinement also leaves the weak derivative of $v_h\in H_0^1(\Omega)$ unchanged. Since $\partial_qv_h\in\mathcal Q_\ell$, its exact coarse-level expansion in \eqref{bpxmapaction} similarly yields
\[
(C_L^{(q)}P_\ell\bm{v})_\alpha
=(\partial_qv_h,\psi_{\alpha}^{(L)})
=\sum_\beta(C_\ell^{(q)}\bm{v})_\beta
(\psi_{\beta}^{(\ell)},\psi_{\alpha}^{(L)})
=(T_{\ell\to L}^{(0)}C_\ell^{(q)}\bm{v})_\alpha,
\qquad 1\le q\le d.
\]
Stacking the $d$ component identities proves $C_LP_\ell\bm{v}=T_{\ell\to L}C_\ell\bm{v}$. Since $\bm{v}$ was arbitrary, both matrix identities follow. Finally, the two orthonormal representations of any $w\in\mathcal Q_\ell$ have the same coefficient norm $\|w\|_{L^2}$. Hence $(T_{\ell\to L}^{(0)})^TT_{\ell\to L}^{(0)}=I$, and its componentwise extension is also an isometry.
\end{proof}
\begin{remark}[Gradient transfers in the finite difference and finite element constructions]\label{rem:fdmfemtransfer}
The difference lies in the representation of the gradient coefficients. In the finite difference construction of \cite[Section~4.2.2]{JinLiuMaYu2026BPXrevised}, differentiation leaves nodal values in the other coordinate directions. For example, in two dimensions the first directional difference is $D_\ell^{(1)}=D_\ell^{\mathrm{1d}}\otimes I$, and, with $r=2^{L-\ell}$,
\[
D_L^{(1)}P_\ell
=\frac1r\bigl(E_{\ell\to L}^{\mathrm{1d}}\otimes P_\ell^{\mathrm{1d}}\bigr)D_\ell^{(1)}.
\]
Here $E_{\ell\to L}^{\mathrm{1d}}$ is the edge-replication matrix of that reference: it copies coarse differences along the differentiated direction. The other direction still requires nodal interpolation by $P_\ell^{\mathrm{1d}}$, which must therefore be included in the transfer encoding.

In the present finite element construction, the undifferentiated direction is also converted to local $L^2$-orthonormal coefficients. Thus
\[
C_\ell^{(1)}=C_\ell^{\mathrm{1d}}\otimes E_\ell^{\mathrm{1d}},\qquad
C_L^{(1)}P_\ell
=\bigl(T_{\ell\to L}^{\mathrm{1d}}\otimes T_{\ell\to L}^{\mathrm{1d}}\bigr)C_\ell^{(1)},
\]
where $T_{\ell\to L}^{\mathrm{1d}}$ transfers one-dimensional local orthonormal coefficients, as constructed below. Both directions now use orthonormal coefficient transfer, so a separate encoding of $P_\ell^{\mathrm{1d}}$ is unnecessary. Its action is retained through the commuting identity. This distinction concerns the chosen gradient representations; replacing the finite difference transfer by the present one would also require compatible gradient factors whose Gram matrix remains the original difference operator.
\end{remark}

The commuting identities explain how to encode the level contribution without introducing the fine-mesh derivative scale $h^{-1}$. Since $S_\ell=\omega_\ell P_\ell M_\ell^{-1/2}$, they give
\[
C_LS_\ell=\omega_\ell T_{\ell\to L}C_\ell M_\ell^{-1/2},
\qquad
\sqrt c\,E_LS_\ell=\sqrt c\,\omega_\ell T_{\ell\to L}^{(0)}E_\ell M_\ell^{-1/2}.
\]
The usefulness of these identities can be understood from two perspectives.
\begin{enumerate}
\item \textbf{Moving differentiation to the coarse level.}
Applying the product rule to separately encoded factors on the left can introduce the fine-grid scale $1/h$. Indeed,
\[
C_LS_\ell=\omega_\ell(C_LM_L^{-1/2})
(M_L^{1/2}P_\ell M_\ell^{-1/2}),
\]
and the first factor has encoding normalization $\sqrt{12d}/h$. Even with normalization $1$ for the second factor, the product normalization is $\omega_\ell\sqrt{12d}/h$. On the right, differentiation is performed on level $\ell$, so the corresponding normalization is $\sqrt{12d}/h_\ell$. The weight $\omega_\ell$ balances this coarse-level scale, giving
\[
\frac{\omega_\ell\sqrt{12d}}{h_\ell}
=\frac{\sqrt{12d}}{\sqrt{1+ch_\ell^2}}
\le\sqrt{12d}.
\]
The transfer $T_{\ell\to L}$ contributes normalization $1$. Similarly, the reaction contribution has normalization $\sqrt c\,\omega_\ell\le1$.

\item \textbf{Transferring the block encoding without increasing its normalization.}
The factor $1/h$ above arises from encoding the left-hand factors separately; it is not an unavoidable normalization for the complete preconditioned map $C_LS_\ell$. The commuting identity provides a way to encode this complete map through the coarse-level matrix $\omega_\ell C_\ell M_\ell^{-1/2}$. Since $T_{\ell\to L}^TT_{\ell\to L}=I$, we also have
\[
\omega_\ell C_\ell M_\ell^{-1/2}
=T_{\ell\to L}^T(C_LS_\ell).
\]
Lemma~\ref{lem:bpxtransferencoding} supplies exact encodings of $T_{\ell\to L}$ and its transpose with normalization $1$. Consequently, a generalized $(\alpha,\varepsilon)$-block-encoding of either $C_LS_\ell$ or $\omega_\ell C_\ell M_\ell^{-1/2}$ yields an encoding of the other with the same parameters, by the product rule. We can therefore construct the encoding on the coarse level, using the one-dimensional maps and tensor products in Lemma~\ref{lem:bpxnormalizedencoding}, and transfer it to the finest level without losing its favorable normalization. The same argument applies to the value map and $T_{\ell\to L}^{(0)}$.
\end{enumerate}
Combining the gradient and reaction contributions gives the mesh-independent energy normalization in \eqref{levelnormalizations}.

The transfer circuit is built from the one-dimensional subdivision matrix used in \cite[Section~6.1]{DeimlPeterseim2025QFEM}. Its derivation below accounts explicitly for the change in the local basis normalization.

\begin{lemma}[Block encodings of the interlevel transfers]\label{lem:bpxtransferencoding}
For $1\le\ell\le L$, the matrices $T_{\ell\to L}^{(0)}$ and $T_{\ell\to L}$ admit generalized block encodings with normalization $1$ in the ideal arithmetic model. Their implementations use a number of gates polynomial in $d$ and $L$.
\end{lemma}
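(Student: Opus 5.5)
The plan is to reduce everything to a single one-dimensional refinement step and then build the transfers by tensor products and composition. Since $\psi^{(\ell)}_{\bm m,\bm\nu}$ is a product of one-dimensional functions, the overlap $(\psi^{(\ell)}_\beta,\psi^{(L)}_\alpha)$ factorizes over coordinates. This gives
\[
T^{(0)}_{\ell\to L}=(T^{\mathrm{1d}}_{\ell\to L})^{\otimes d},\qquad
T^{\mathrm{1d}}_{\ell\to L}=T^{\mathrm{1d}}_{L-1\to L}\cdots T^{\mathrm{1d}}_{\ell\to\ell+1}.
\]
The second identity holds because a cellwise linear function on level $\ell$ is also cellwise linear on every intermediate level. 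Each coefficient conversion can therefore be factored through the intermediate orthonormal bases, using the same argument as in Lemma~\ref{lem:bpxcommuting}. Finally, $T_{\ell\to L}=I_d\otimes T^{(0)}_{\ell\to L}$, which is a direct sum of $d$ equal copies acting on a component register.

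\textbf{Step 1: the one-step subdivision matrix.} I would compute $T^{\mathrm{1d}}_{k\to k+1}$ explicitly. A coarse interval $I^{(k)}_i$ splits into the fine intervals $I^{(k+1)}_{2i}$ and $I^{(k+1)}_{2i+1}$, each of width $h_{k+1}=h_k/2$. Restricting the coarse modes gives the following. The constant mode $1/\sqrt{h_k}$ equals $\frac{1}{\sqrt2}\psi^{(k+1)}_{\cdot,0}$ on each half. On the left half, $2\varphi_k-1=(\varphi_{k+1}-1)/2$, and on the right half it equals $\varphi_{k+1}/2$. Hence the linear mode restricts to
\[
\sqrt{3/h_k}\,\tfrac12(2\varphi_{k+1}-1)\mp\sqrt{3/h_k}\,\tfrac12 ,
\]
that is, $\frac{1}{2\sqrt2}\psi^{(k+1)}_{\cdot,1}\mp\frac{\sqrt3}{2\sqrt2}\psi^{(k+1)}_{\cdot,0}$. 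Thus $T^{\mathrm{1d}}_{k\to k+1}=I_{2^k}\otimes G$, where the local $4\times2$ block $G$ has orthonormal columns. The interval register gains one qubit, and this new qubit becomes the lowest bit of the fine interval index. The key point is that $G$ extends to a fixed $4\times4$ orthogonal matrix $U_G$ acting on the mode qubit together with one fresh ancilla initialized in $\ket0$. Hence $U_G$ is an exact generalized block encoding of $G$ with normalization $1$: the input embedding appends $\ket0$, and the output embedding is the identity on the two qubits.

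\textbf{Step 2: assembly.} Placing $U_G$ on the mode qubit of each coordinate, with a freshly introduced qubit, implements $I\otimes G$ exactly. The main bookkeeping point, and where I expect the only real care to be needed, is the register layout. Each fine step produces an index ordered as (interval bits, new bit, mode bit). After $L-\ell$ steps, the interval register must read as the level-$L$ index $(m,\nu)$. This only requires relabeling wires, or a qubit permutation of $\mathcal O(L)$ SWAPs per coordinate, with the new bit appended as the least significant interval bit each time.

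Because every factor is an exact normalization-$1$ encoding, the product rule of Lemma~\ref{lem:arithmeticBE} gives an exact normalization-$1$ encoding of $T^{\mathrm{1d}}_{\ell\to L}$ using $L-\ell$ ancillas. The tensor product rule then gives one for $T^{(0)}_{\ell\to L}$, and the direct sum rule over the $d$ components gives one for $T_{\ell\to L}$. The adjoint rule covers the transposes. The gate count is $\mathcal O(d(L-\ell))$ two-qubit gates plus $\mathcal O(dL^2)$ SWAPs, which is polynomial in $d$ and $L$. If a level-controlled version is needed later, controlling each step on the level register $\ket{\ell-1}$ adds only polylogarithmic overhead.
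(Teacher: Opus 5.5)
Your proposal is correct and follows the paper's proof essentially step by step: the same local $4\times2$ block with orthonormal columns is completed to a $4\times4$ unitary using one fresh ancilla in $\ket0$, then composed over the $L-\ell$ subdivisions, tensored over the $d$ coordinates, and taken as a direct sum over the gradient directions. There is one small slip in an intermediate identity, though your subsequent expansion and the matrix $G$ are correct: on the two halves one has $2\varphi_k-1=\varphi_{k+1}-1$ (left) and $2\varphi_k-1=\varphi_{k+1}$ (right), not $(\varphi_{k+1}-1)/2$ and $\varphi_{k+1}/2$.
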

\begin{proof}
We first consider one subdivision in one dimension. On a parent interval $I_i^{(\ell)}$, write $\varphi=(x-x_i)/h_\ell$ and let $w_0,w_1$ be the coefficients of a linear function in the local orthonormal basis:
\[
w(x)=\frac{w_0}{\sqrt{h_\ell}}
+\sqrt{\frac3{h_\ell}}\,w_1(2\varphi-1).
\]
The left and right halves are $[x_i,x_i+h_\ell/2]$ and $[x_i+h_\ell/2,x_i+h_\ell]$, respectively, and both have length $h_{\ell+1}=h_\ell/2$. The local coordinate on each half measures the distance from its own left endpoint, divided by its own length. Thus
\[
\varphi_-=
\frac{x-x_i}{h_\ell/2}=2\varphi,
\qquad
\varphi_+=
\frac{x-(x_i+h_\ell/2)}{h_\ell/2}=2\varphi-1.
\]
Each coordinate maps its corresponding half to $[0,1]$: it equals $0$ at that half's left endpoint and $1$ at its right endpoint. On the reference interval $[0,1]$, the orthonormal basis functions are $1$ and $\sqrt3(2\varphi_\pm-1)$. Mapping them to a half-interval of length $h_{\ell+1}$ requires the additional factor $h_{\ell+1}^{-1/2}$. Thus the orthonormal basis functions on the left and right halves are, respectively,
\[
\frac{1}{\sqrt{h_{\ell+1}}},\quad
\sqrt{\frac3{h_{\ell+1}}}(2\varphi_--1),
\qquad\text{and}\qquad
\frac{1}{\sqrt{h_{\ell+1}}},\quad
\sqrt{\frac3{h_{\ell+1}}}(2\varphi_+-1).
\]
Each basis function is extended by zero outside its corresponding half. To express the coarse-cell function in these bases, we use the identities
\[
2\varphi-1=\frac12(2\varphi_--1)-\frac12,
\qquad
2\varphi-1=\frac12(2\varphi_+-1)+\frac12.
\]
The coefficients $w_{-,0},w_{-,1}$ and $w_{+,0},w_{+,1}$ are defined by expanding the same function $w$ in the orthonormal bases on the left and right halves:
\[
\begin{aligned}
w(x)&=\frac{w_{-,0}}{\sqrt{h_{\ell+1}}}
+\sqrt{\frac3{h_{\ell+1}}}\,w_{-,1}(2\varphi_--1),
&&x\in[x_i,x_i+h_\ell/2],\\
w(x)&=\frac{w_{+,0}}{\sqrt{h_{\ell+1}}}
+\sqrt{\frac3{h_{\ell+1}}}\,w_{+,1}(2\varphi_+-1),
&&x\in[x_i+h_\ell/2,x_i+h_\ell].
\end{aligned}
\]
Substituting the coordinate identities into the coarse-cell expansion gives, on the left half,
\[
w(x)=\frac{w_0-\frac{\sqrt3}{2}w_1}{\sqrt{h_\ell}}
+\sqrt{\frac3{h_\ell}}\,\frac{w_1}{2}(2\varphi_--1),
\]
and, on the right half,
\[
w(x)=\frac{w_0+\frac{\sqrt3}{2}w_1}{\sqrt{h_\ell}}
+\sqrt{\frac3{h_\ell}}\,\frac{w_1}{2}(2\varphi_+-1).
\]
Comparing the constant and linear coefficients with the two normalized expansions, and using $h_{\ell+1}=h_\ell/2$, yields
\[
\begin{aligned}
w_{-,0}&=\frac1{\sqrt2}\Big(w_0-\frac{\sqrt3}{2}w_1\Big),
& w_{-,1}&=\frac{w_1}{2\sqrt2},\\
w_{+,0}&=\frac1{\sqrt2}\Big(w_0+\frac{\sqrt3}{2}w_1\Big),
& w_{+,1}&=\frac{w_1}{2\sqrt2}.
\end{aligned}
\]
Collecting these four scalar relations gives the local transfer matrix:
\[
\begin{bmatrix}w_{-,0}\\w_{-,1}\\w_{+,0}\\w_{+,1}\end{bmatrix}
=\frac1{\sqrt2}
\begin{bmatrix}
1&-\sqrt3/2\\
0&1/2\\
1&\sqrt3/2\\
0&1/2
\end{bmatrix}
\begin{bmatrix}w_0\\w_1\end{bmatrix}.
\]
Thus the first two rows give the constant and linear coefficients on the left half, and the last two rows give those on the right half. The factor $1/\sqrt2$ comes from the change in normalization when the cell length is halved. Both coefficient vectors represent exactly the same function on the parent interval.

Applying this local conversion independently to all $2^\ell$ parent intervals gives
\[
T_{\ell\to\ell+1}^{\mathrm{1d}}
=I^{\otimes\ell}\otimes\frac{1}{\sqrt2}
\begin{bmatrix}
1&-\sqrt3/2\\
0&1/2\\
1&\sqrt3/2\\
0&1/2
\end{bmatrix}.
\]
Here $I$ is the $2\times2$ identity, so $I^{\otimes\ell}$ acts on the parent-cell index and repeats the local conversion along the diagonal. Consequently, $T_{\ell\to\ell+1}^{\mathrm{1d}}$ maps $2^{\ell+1}$ coarse coefficients to $2^{\ell+2}$ fine coefficients.

The local $4\times2$ map, including the factor $1/\sqrt2$, has orthonormal columns. It can therefore be completed to a $4\times4$ unitary by initializing one additional qubit in $\ket0$. This realizes one subdivision with normalization $1$. Applying the same construction successively gives
\[
T_{\ell\to L}^{\mathrm{1d}}
=T_{L-1\to L}^{\mathrm{1d}}\cdots T_{\ell\to\ell+1}^{\mathrm{1d}},
\qquad
T_{\ell\to L}^{(0)}=(T_{\ell\to L}^{\mathrm{1d}})^{\otimes d}.
\]
For $\ell=L$, the transfer is the identity. Each subdivision appends one child-cell bit, and the tensor product applies this operation separately in each coordinate. All these maps have normalization $1$, so the product and tensor-product rules preserve this normalization.

Finally, $T_{\ell\to L}$ applies $T_{\ell\to L}^{(0)}$ independently to each gradient component. Its circuit leaves the direction register unchanged. Thus the direct sum over directions also has normalization $1$. There are $d(L-\ell)$ local subdivisions; the qubit permutations and the tests of active indices have polynomial cost in $d$ and $L$.
\end{proof}

\subsection{The one-dimensional inverse mass square root}

The factor $S_\ell$ contains $M_\ell^{-1/2}$. Its tensor-product structure reduces the construction to $(M_\ell^{\mathrm{1d}})^{-1/2}$, where $M_\ell^{\mathrm{1d}}=M_{h_\ell}$. We implement this one-dimensional map by a discrete sine transform, an eigenvalue-dependent flag rotation, and the inverse transform, before combining it with the local value and derivative maps.

On the $2^\ell-1$ interior nodes, the mass matrix is
\[
M_\ell^{\mathrm{1d}}=\frac{h_\ell}{6}\operatorname{tridiag}(1,4,1).
\]
Its eigenvectors are the discrete sine vectors. Indeed, with zero values at the two boundary nodes, the identity
\[
\sin\frac{\pi(i-1)j}{2^\ell}
+\sin\frac{\pi(i+1)j}{2^\ell}
=2\cos\frac{\pi j}{2^\ell}\sin\frac{\pi ij}{2^\ell}
\]
shows that multiplying a sine vector by $M_\ell^{\mathrm{1d}}$ multiplies it by $h_\ell(2+\cos(\pi j/2^\ell))/3$. Let $\mathcal F_\ell$ denote the discrete sine transform with entries
\[
(\mathcal F_\ell)_{ij}
=\sqrt{\frac{2}{2^\ell}}\sin\frac{\pi ij}{2^\ell},
\qquad 1\le i,j\le2^\ell-1.
\]
These sine vectors are orthonormal. We represent the interior indices by the computational states $\ket{1},\cdots,\ket{2^\ell-1}$ of an $\ell$-qubit register and extend $\mathcal F_\ell$ by fixing the unused state $\ket0$. On the active subspace spanned by the interior indices,
\begin{equation}\label{bpxmassspectrum}
\begin{aligned}
M_\ell^{\mathrm{1d}}&=\mathcal F_\ell\diag_j(\mu_{j}^{(\ell)})\mathcal F_\ell^\dagger,\\
\mu_{j}^{(\ell)}&=\frac{h_\ell}{3}\Big(2+\cos\frac{\pi j}{2^\ell}\Big),
\qquad 1\le j\le2^\ell-1.
\end{aligned}
\end{equation}
Consequently, its inverse square root is obtained by replacing each eigenvalue by its inverse square root:
\[
(M_\ell^{\mathrm{1d}})^{-1/2}
=\mathcal F_\ell\diag_j\bigl((\mu_j^{(\ell)})^{-1/2}\bigr)\mathcal F_\ell^\dagger.
\]
The bounds $h_\ell/3\le\mu_j^{(\ell)}\le h_\ell$ imply
$\|(M_\ell^{\mathrm{1d}})^{-1/2}\|\le\sqrt{3/h_\ell}$.
We therefore choose the block-encoding normalization
$\alpha_{(M_\ell^{\mathrm{1d}})^{-1/2}}=\sqrt{3/h_\ell}$.
With this choice, all the diagonal entries to be encoded lie between $1/\sqrt3$ and $1$.

We now implement this diagonal action using a flag qubit initialized in $\ket0$. First apply $\mathcal F_\ell^\dagger$ to express the input in the sine basis. For each eigenvalue index $j$, reversibly compute the angle
\[
\theta_j^{(\ell)}
=2\arccos\Big(\frac{1}{\sqrt{2+\cos(\pi j/2^\ell)}}\Big)
\]
in an arithmetic register and apply the controlled rotation $R_y(\theta_j^{(\ell)})$ to the flag. Its amplitude on $\ket0$ is
\begin{equation}\label{bpxmassrotation}
\frac{1}{\alpha_{(M_\ell^{\mathrm{1d}})^{-1/2}}\sqrt{\mu_{j}^{(\ell)}}}
=\frac{1}{\sqrt{2+\cos(\pi j/2^\ell)}},
\qquad \alpha_{(M_\ell^{\mathrm{1d}})^{-1/2}}=\sqrt{3/h_\ell}.
\end{equation}
Thus, after returning the arithmetic register to zero, the resulting controlled rotation $\mathcal U_\ell^{\mathrm{rot}}$ acts as
\[
\ket j\ket0\longmapsto\ket j\Big(
\frac{\ket0}{\sqrt{2+\cos(\pi j/2^\ell)}}
+\sqrt{1-\frac{1}{2+\cos(\pi j/2^\ell)}}\ket1
\Big).
\]
For the unused index $j=0$, we instead flip the flag to $\ket1$, so that this index has zero amplitude in the successful branch. Finally, apply $\mathcal F_\ell$ to return to nodal coordinates. Suppressing the arithmetic registers, which have been returned to zero, the ideal unitary is
\[
\mathcal U_{(M_\ell^{\mathrm{1d}})^{-1/2}}
=(\mathcal F_\ell\otimes I)\mathcal U_\ell^{\mathrm{rot}}(\mathcal F_\ell^\dagger\otimes I).
\]
For any normalized input $\ket{\bm{v}}$ supported on the interior nodes, its output has the form
\[
\mathcal U_{(M_\ell^{\mathrm{1d}})^{-1/2}}
\bigl(\ket{\bm{v}}\ket0\bigr)
=\frac{(M_\ell^{\mathrm{1d}})^{-1/2}\ket{\bm{v}}}
{\sqrt{3/h_\ell}}\ket0
+\ket{\mathrm{fail}_{\bm{v}}}\ket1,
\]
where $\ket{\mathrm{fail}_{\bm{v}}}$ denotes the possibly unnormalized vector in the other flag branch. Projecting onto flag state $\ket0$ therefore gives
\begin{equation}\label{bpxmassencoding}
(I^{\otimes \ell}\otimes\bra0)
\mathcal U_{(M_\ell^{\mathrm{1d}})^{-1/2}}
(I^{\otimes \ell}\otimes\ket0)
=\frac{(M_\ell^{\mathrm{1d}})^{-1/2}}{\sqrt{3/h_\ell}}.
\end{equation}
Here the matrix on the right is extended by zero on the unused index. The projection identifies the encoded block; no measurement is required when this unitary is composed with the value and derivative encodings below.

The sine transform can be implemented using a quantum Fourier transform \cite[Chapter~5]{Nielsen2010} on an odd extension of the nodal vector. Since $2^\ell$ is a power of two, the extended transform uses $\ell+1$ qubits. The scalar functions used in the rotation are evaluated on $2+\cos(\pi j/2^\ell)\in[1,3]$, so their arguments remain bounded away from the singularity of the inverse square root. The transforms, reversible arithmetic, and rotations can therefore be implemented with a gate cost polynomial in $\ell$ and the number of precision bits. If the implemented unitary differs from the ideal one by at most $\zeta$ in operator norm, its successful block differs by at most $\zeta$, and the corresponding absolute error in the inverse mass square root is at most $\sqrt{3/h_\ell}\,\zeta$. Thus an absolute matrix tolerance $\delta$ is achieved by choosing $\zeta\le\delta\sqrt{h_\ell/3}$. This precision requirement is retained when the inverse mass encoding is composed with the local value and derivative maps.

\subsection{Block encodings of the normalized value and gradient maps}

We now encode $E_\ell M_\ell^{-1/2}$ and $C_\ell M_\ell^{-1/2}$. The key step is to reduce the normalization of the one-dimensional value encoding to $1$ before taking tensor products. Only the derivative factor then contributes a mesh-dependent normalization.

\begin{lemma}[Normalized value and gradient encodings]\label{lem:bpxnormalizedencoding}
For $1\le\ell\le L$, the matrices $E_\ell M_\ell^{-1/2}$ and $C_\ell M_\ell^{-1/2}$ admit generalized block encodings with normalizations $1$ and $\sqrt{12d}/h_\ell$, respectively, in the ideal arithmetic model.
\end{lemma}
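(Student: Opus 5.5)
The plan is to exploit the tensor structure $M_\ell^{-1/2}=\bigl((M_\ell^{\mathrm{1d}})^{-1/2}\bigr)^{\otimes d}$, which follows from $M_\ell=(M_\ell^{\mathrm{1d}})^{\otimes d}$ in \eqref{tensormatrices}. It reduces everything to two one-dimensional matrices,
\[
G_\ell:=E_\ell^{\mathrm{1d}}(M_\ell^{\mathrm{1d}})^{-1/2},\qquad
H_\ell:=C_\ell^{\mathrm{1d}}(M_\ell^{\mathrm{1d}})^{-1/2}.
\]
By \eqref{bpxmaptensorproducts}, these give
\[
E_\ell M_\ell^{-1/2}=G_\ell^{\otimes d},\qquad
C_\ell^{(q)}M_\ell^{-1/2}=G_\ell^{\otimes(q-1)}\otimes H_\ell\otimes G_\ell^{\otimes(d-q)}.
\]
Lemma~\ref{lem:bpx1dfactors} gives $G_\ell^TG_\ell=(M_\ell^{\mathrm{1d}})^{-1/2}M_\ell^{\mathrm{1d}}(M_\ell^{\mathrm{1d}})^{-1/2}=I$, so $G_\ell$ is an isometry. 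Likewise $\|H_\ell\|^2=\|(M_\ell^{\mathrm{1d}})^{-1/2}A_{h_\ell}(M_\ell^{\mathrm{1d}})^{-1/2}\|=\max_j\lambda_j^{(\ell)}/\mu_j^{(\ell)}\le 12/h_\ell^2$. Hence normalization $1$ for $G_\ell$ and $\sqrt{12}/h_\ell$ for $H_\ell$ are the targets. The tensor-product rule of Lemma~\ref{lem:arithmeticBE} then gives normalization $1$ for $E_\ell M_\ell^{-1/2}$ and $\sqrt{12}/h_\ell$ for each $C_\ell^{(q)}M_\ell^{-1/2}$. Stacking the $d$ blocks with a common input embedding (concatenation rule, with a direction register) gives $\sqrt{d\cdot12/h_\ell^2}=\sqrt{12d}/h_\ell$ for $C_\ell M_\ell^{-1/2}$.

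For $H_\ell$, I would use a direct product encoding. The local maps in \eqref{bpxlocalmaps} show that $\sqrt{h_\ell}\,C_\ell^{\mathrm{1d}}$ has entries $\pm1$ with at most one nonzero per row and two per column. Writing it as a difference of two partial shift/selection matrices (each of norm $1$) via a linear combination of unitaries gives an exact encoding with normalization $2/\sqrt{h_\ell}$. Composing with the encoding \eqref{bpxmassencoding} of normalization $\sqrt{3/h_\ell}$ by the product rule yields $2\sqrt3/h_\ell=\sqrt{12}/h_\ell$, exactly as required.

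The main obstacle is $G_\ell$. A naive product of an encoding of $E_\ell^{\mathrm{1d}}$ (normalization $\sqrt{h_\ell}$, since $\|E_\ell^{\mathrm{1d}}\|^2=\|M_{h_\ell}\|\le h_\ell$) with the inverse mass encoding gives normalization $\sqrt3$. After the $d$-fold tensor product this would become $3^{d/2}$, which is unacceptable. I would remove this factor using the fact that $G_\ell$ is an isometry.

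First, $E_\ell^{\mathrm{1d}}/\sqrt{h_\ell}$ is encoded from its local $2\times2$ blocks and the boundary truncation. Combining it with \eqref{bpxmassencoding} gives a generalized block encoding whose encoded block is $G_\ell/\sqrt3$, with all singular values exactly $1/\sqrt3$. Next, I would rescale by an extra flag rotation so that the block becomes $G_\ell/2$, with every singular value equal to $\sin(\pi/6)$. One round of exact oblivious amplitude amplification (equivalently, QSVT with the Chebyshev polynomial $T_3$, since $\sin(3\cdot\pi/6)=1$) then maps every singular value to exactly $1$. Because the singular values are uniform, this step involves no approximation. It yields an exact generalized encoding of $G_\ell$ with normalization $1$, using three applications of the base circuit and its adjoint.

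Gate counts stay polynomial in $\ell$ and the arithmetic precision, inheriting the costs from the sine-transform construction. The tensor-product and concatenation rules then complete the argument.
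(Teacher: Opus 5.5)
Your proposal is correct and follows essentially the paper's route. You reduce to the one-dimensional factors via $M_\ell^{-1/2}=((M_\ell^{\mathrm{1d}})^{-1/2})^{\otimes d}$, use an endpoint-shift LCU composed with the inverse mass encoding for the derivative factor (normalization $\sqrt{12}/h_\ell$), and apply one exact oblivious-amplification step to $G_\ell/2$ (three calls, $3F-4FF^\dagger F=2F$), followed by tensor products and a direction register giving $\sqrt{12d}/h_\ell$. The only difference is cosmetic: the paper's Hadamard-selector LCU encodes $E_\ell^{\mathrm{1d}}$ with normalization $\sqrt{4h_\ell/3}$, so the product with $\sqrt{3/h_\ell}$ lands exactly on $G_\ell/2$, and neither your extra flag rotation nor your normalization-$\sqrt{h_\ell}$ encoding of $E_\ell^{\mathrm{1d}}$ (whose construction you only sketch) is needed.
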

\begin{proof}
\noindent\textbf{Step 1: One-dimensional value and derivative matrices.}
For $\bm{v}=(v_1,\cdots,v_{2^\ell-1})^T$, set $v_0=v_{2^\ell}=0$ and define the two endpoint maps by
\[
R_{\ell,0}\bm{v}=\sum_{i=0}^{2^\ell-1}v_i\ket{i},
\qquad
R_{\ell,1}\bm{v}=\sum_{i=0}^{2^\ell-1}v_{i+1}\ket{i}.
\]
Both maps send distinct interior nodes to distinct cell indices, so $R_{\ell,r}^TR_{\ell,r}=I$ for $r=0,1$. They are implemented by index shifts with zero padding and admit exact generalized $(1,0)$-block-encodings $U_0,U_1$ with common input and output embeddings. If $H$ denotes the Hadamard gate, the selector circuits
\[
U_\pm=(H\otimes I)
\Big(\ket0\bra0\otimes U_1\ \pm\ \ket1\bra1\otimes U_0\Big)
(H\otimes I)
\]
have successful blocks $(R_{\ell,1}\pm R_{\ell,0})/2$. Thus the endpoint sum and difference both admit generalized $(2,0)$-block-encodings.

By \eqref{bpxlocalmaps}, with the output ordered as $\ket{i}\ket{\nu}$,
\[
\begin{aligned}
E_\ell^{\mathrm{1d}}
&=\frac{\sqrt{h_\ell}}2(R_{\ell,1}+R_{\ell,0})\otimes\ket0
+\sqrt{\frac{h_\ell}{12}}(R_{\ell,1}-R_{\ell,0})\otimes\ket1,\\
C_\ell^{\mathrm{1d}}
&=\frac1{\sqrt{h_\ell}}(R_{\ell,1}-R_{\ell,0})\otimes\ket0.
\end{aligned}
\]
The two value components therefore have normalizations $\sqrt{h_\ell}$ and $\sqrt{h_\ell/3}$. The concatenation rule in Lemma~\ref{lem:arithmeticBE} gives
\[
\alpha_{E_\ell^{\mathrm{1d}}}
=\sqrt{h_\ell+\frac{h_\ell}3}=\sqrt{\frac{4h_\ell}3},
\qquad
\alpha_{C_\ell^{\mathrm{1d}}}=\frac2{\sqrt{h_\ell}}.
\]
For example, the value encoding uses the selector state $(\sqrt3\ket0+\ket1)/2$ and applies $U_+$ or $U_-$, retaining the selector as the local mode index. Its successful output is $E_\ell^{\mathrm{1d}}\bm{v}/\sqrt{4h_\ell/3}$.

\noindent\textbf{Step 2: Composition with the inverse mass square root.}
The inverse mass encoding in \eqref{bpxmassencoding} has normalization $\sqrt{3/h_\ell}$. Applying the product rule with separate flags yields
\[
\begin{aligned}
\alpha_{E_\ell^{\mathrm{1d}}(M_\ell^{\mathrm{1d}})^{-1/2}}
&=\sqrt{\frac{4h_\ell}3}\sqrt{\frac3{h_\ell}}=2,\\
\alpha_{C_\ell^{\mathrm{1d}}(M_\ell^{\mathrm{1d}})^{-1/2}}
&=\frac2{\sqrt{h_\ell}}\sqrt{\frac3{h_\ell}}
=\frac{\sqrt{12}}{h_\ell}.
\end{aligned}
\]
The first normalization can be reduced to $1$. Indeed, Lemma~\ref{lem:bpx1dfactors} gives
\[
\Big(E_\ell^{\mathrm{1d}}(M_\ell^{\mathrm{1d}})^{-1/2}\Big)^T
\Big(E_\ell^{\mathrm{1d}}(M_\ell^{\mathrm{1d}})^{-1/2}\Big)
=(M_\ell^{\mathrm{1d}})^{-1/2}M_\ell^{\mathrm{1d}}(M_\ell^{\mathrm{1d}})^{-1/2}=I.
\]
Let $U$ denote the product encoding of this value map, and write its successful block as
\[
F:=\Pi_{\mathrm{out}}^\dagger U\Pi_{\mathrm{in}}
=\tfrac12 E_\ell^{\mathrm{1d}}(M_\ell^{\mathrm{1d}})^{-1/2},
\qquad F^\dagger F=\tfrac14I.
\]
Using reflections about the active input and successful output subspaces, define
\[
\begin{aligned}
R_{\mathrm{in}}&=2\Pi_{\mathrm{in}}\Pi_{\mathrm{in}}^\dagger-I,
\qquad R_{\mathrm{out}}=2\Pi_{\mathrm{out}}\Pi_{\mathrm{out}}^\dagger-I,\\
U_{\mathrm{amp}}&=-UR_{\mathrm{in}}U^\dagger R_{\mathrm{out}}U.
\end{aligned}
\]
Its successful block is
\[
\Pi_{\mathrm{out}}^\dagger U_{\mathrm{amp}}\Pi_{\mathrm{in}}
=3F-4FF^\dagger F
=2F=E_\ell^{\mathrm{1d}}(M_\ell^{\mathrm{1d}})^{-1/2}.
\]
Thus one oblivious amplitude-amplification step, using three calls to $U$ or $U^\dagger$, gives a generalized $(1,0)$-block-encoding of the normalized value map. The derivative encoding retains normalization $\sqrt{12}/h_\ell$. Finite-precision implementations can be treated by robust oblivious amplitude amplification \cite[Theorem~28]{Gilyen2019QSVD}.

\noindent\textbf{Step 3: Tensor products and the gradient components.}
Since $M_\ell=(M_\ell^{\mathrm{1d}})^{\otimes d}$, we have
\begin{equation}\label{bpxnormalizedvalue}
E_\ell M_\ell^{-1/2}
=\Big[E_\ell^{\mathrm{1d}}(M_\ell^{\mathrm{1d}})^{-1/2}\Big]^{\otimes d},
\qquad \alpha_{E_\ell M_\ell^{-1/2}}=1^d=1.
\end{equation}
Here each one-dimensional value encoding is amplified before forming the tensor product. For the $q$th gradient component,
\[
\begin{aligned}
C_\ell^{(q)}M_\ell^{-1/2}
={}&\Big[E_\ell^{\mathrm{1d}}(M_\ell^{\mathrm{1d}})^{-1/2}\Big]^{\otimes(q-1)}\\
&\otimes\Big[C_\ell^{\mathrm{1d}}(M_\ell^{\mathrm{1d}})^{-1/2}\Big]
\otimes\Big[E_\ell^{\mathrm{1d}}(M_\ell^{\mathrm{1d}})^{-1/2}\Big]^{\otimes(d-q)},\\
\alpha_{C_\ell^{(q)}M_\ell^{-1/2}}
={}&1^{q-1}\frac{\sqrt{12}}{h_\ell}1^{d-q}
=\frac{\sqrt{12}}{h_\ell}.
\end{aligned}
\]
Prepare the direction state $d^{-1/2}\sum_{q=1}^d\ket{q-1}$ and apply the component encodings controlled on this register. For an input $\bm{v}$, retaining the direction label gives the successful output
\[
\frac1{\sqrt d}\sum_{q=1}^d\ket{q-1}\otimes
\frac{C_\ell^{(q)}M_\ell^{-1/2}\bm{v}}{\sqrt{12}/h_\ell}
=\frac{C_\ell M_\ell^{-1/2}\bm{v}}{\sqrt{12d}/h_\ell},
\]
where the right-hand side uses the stacked ordering of the gradient components. Hence
\begin{equation}\label{bpxnormalizedgradient}
\alpha_{C_\ell M_\ell^{-1/2}}
=\Big(\sum_{q=1}^d\frac{12}{h_\ell^2}\Big)^{1/2}
=\frac{\sqrt{12d}}{h_\ell}.
\end{equation}
\end{proof}

\subsection{Block encoding of the BPX system}

The level encodings yield the BPX matrix $A_S=S^TK_{h,d}S$ and an $L^2$ representation of the finite element solution. Define
\begin{equation}\label{compositemaps}
\begin{aligned}
\mathcal E&=E_LS=[\mathcal E_1\ \cdots\ \mathcal E_L],\\
\mathcal C&=\begin{bmatrix}C_LS\\\sqrt c\,E_LS\end{bmatrix}
=[\mathcal C_1\ \cdots\ \mathcal C_L].
\end{aligned}
\end{equation}
We call $\mathcal E=E_LS$ the $L^2$ solution representation map. For a solution $\bm{z}$ of \eqref{bpxsystem}, the coefficient transformations are
\[
\underbrace{\bm{z}}_{\text{multilevel coefficients}}
\xrightarrow{\ S\ }
\underbrace{\bm{u}_h=S\bm{z}}_{\text{nodal coefficients}}
\xrightarrow{\ E_L\ }
\underbrace{E_L\bm{u}_h=\mathcal E\bm{z}}_{\text{$L^2$-orthonormal coefficients}}.
\]
The last two vectors represent the same finite element function $u_h$: the former uses the nodal basis, and the latter uses the finest-level cellwise $L^2$-orthonormal basis $\{\psi_{\bm{m},\bm{\nu}}^{(L)}\}$. This orthonormal basis spans the space of cellwise multilinear functions containing $V_h$; it is not obtained merely by rescaling the nodal basis. We use its coefficients as the solution representation, with $\|\mathcal E\bm{z}\|_2=\|u_h\|_{L^2}$. The arrows describe coefficient identities: the circuit implements the composite map $\mathcal E$ without first preparing an intermediate nodal coefficient state. Thus the local orthonormal basis serves both the block-encoding construction and the final output. The map $\mathcal C$ additionally represents the gradient and multiplies the value coefficients by $\sqrt c$, so its Gram matrix gives the preconditioned reaction--diffusion system.

\noindent\textbf{Level encodings.}
The commuting identities give
\begin{equation}\label{compositelevels}
\begin{aligned}
\mathcal E_\ell&=\omega_\ell T_{\ell\to L}^{(0)}E_\ell M_\ell^{-1/2},\\
\mathcal C_\ell&=\omega_\ell
\begin{bmatrix}
T_{\ell\to L}C_\ell M_\ell^{-1/2}\\
\sqrt c\,T_{\ell\to L}^{(0)}E_\ell M_\ell^{-1/2}
\end{bmatrix},\\
A_S&=\mathcal C^T\mathcal C
=S^T(A_{h,d}+cM_{h,d})S.
\end{aligned}
\end{equation}
By Lemmas~\ref{lem:bpxtransferencoding} and \ref{lem:bpxnormalizedencoding}, the gradient and value maps inside these expressions admit encodings with normalizations $\sqrt{12d}/h_\ell$ and $1$, respectively. To combine the two parts of $\mathcal C_\ell$, prepare
\[
\frac{(\sqrt{12d}/h_\ell)\ket0+\sqrt c\ket1}
{\sqrt{12d\,h_\ell^{-2}+c}}
\]
and apply the gradient or value encoding controlled on this selector, followed by the appropriate transfer. For an input $\bm{z}_\ell$, the successful output is
\[
\begin{aligned}
&\frac1{\sqrt{12d\,h_\ell^{-2}+c}}
\Big(\ket0\otimes T_{\ell\to L}C_\ell M_\ell^{-1/2}\bm{z}_\ell
+\sqrt c\ket1\otimes T_{\ell\to L}^{(0)}E_\ell M_\ell^{-1/2}\bm{z}_\ell\Big)\\
&\hspace{2em}=\frac{\mathcal C_\ell\bm{z}_\ell}
{\omega_\ell\sqrt{12d\,h_\ell^{-2}+c}},
\end{aligned}
\]
where the selector distinguishes the gradient and value output spaces. Consequently,
\begin{equation}\label{levelnormalizations}
\alpha_{\mathcal E_\ell}=\omega_\ell\le h_\ell,\qquad
\alpha_{\mathcal C_\ell}^2
=\omega_\ell^2(12d\,h_\ell^{-2}+c)
=\frac{12d\,h_\ell^{-2}+c}{h_\ell^{-2}+c}\le12d.
\end{equation}
At $c=0$, only the gradient branch is present. Evaluation of the selector amplitudes and weights is included in the arithmetic gate cost.

\begin{theorem}[Block encodings of the reaction--diffusion BPX system]\label{thm:encoding}
In the ideal arithmetic model, $\mathcal C$, $A_S$, and $\mathcal E$ admit generalized $(\alpha_{\mathcal C},0)$-, $(\alpha_{A_S},0)$-, and $(\alpha_{\mathcal E},0)$-block-encodings, respectively, with
\begin{equation}\label{normalizations}
\alpha_{\mathcal C}=\sqrt{12dL},\qquad
\alpha_{A_S}=12dL,\qquad
\alpha_{\mathcal E}=\frac1{\sqrt3}.
\end{equation}
Moreover, $\|\mathcal E\|\le1/\sqrt3$. These bounds are uniform in $c\ge0$. The circuits use polynomially many gates in $d$, $L$, and the arithmetic bit lengths, including the representation of $c$.
\end{theorem}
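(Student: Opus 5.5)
The plan is to assemble the three encodings level by level from the per-level encodings already built, and then combine the levels with the concatenation rule of Lemma~\ref{lem:arithmeticBE}.

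\textbf{Level encodings.} For each level, Lemmas~\ref{lem:bpxtransferencoding} and~\ref{lem:bpxnormalizedencoding}, together with the selector construction preceding \eqref{levelnormalizations}, give an exact generalized encoding of $\mathcal C_\ell$ with normalization
\[
\alpha_{\mathcal C_\ell}=\omega_\ell\sqrt{12d\,h_\ell^{-2}+c}.
\]
They also give an exact encoding of $\mathcal E_\ell=\omega_\ell T_{\ell\to L}^{(0)}E_\ell M_\ell^{-1/2}$ with normalization $\alpha_{\mathcal E_\ell}=\omega_\ell$. This comes from the product rule applied to the normalization-$1$ value encoding \eqref{bpxnormalizedvalue} and the normalization-$1$ transfer. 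The scalar $\omega_\ell$ simply becomes the normalization, since the block itself encodes $\mathcal E_\ell/\omega_\ell$.

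\textbf{Combining the levels.} The blocks $\mathcal C_\ell$ (respectively $\mathcal E_\ell$) share a common output space, namely the finest-level orthonormal coefficients with gradient or value labels. The input space is the direct sum over the level register $\ket{\ell-1}$. The concatenation rule therefore yields
\[
\alpha_{\mathcal C}^2=\sum_{\ell=1}^L\alpha_{\mathcal C_\ell}^2\le 12dL
\]
by \eqref{levelnormalizations}. For $\mathcal E$ it yields
\[
\alpha_{\mathcal E}^2=\sum_{\ell}\omega_\ell^2\le\sum_{\ell\ge1}4^{-\ell}=\tfrac13.
\]
Concretely, I would implement this as a level-controlled application of the $\ell$th encoding, followed by a rotation on a flag. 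The rotation reweights branch $\ell$ by $\alpha_{\mathcal C_\ell}/\alpha_{\mathcal C}$ (respectively $\omega_\ell/\sqrt{1/3}$), with angles computed reversibly from $h_\ell$ and $c$. A final Hadamard-type uncomputation of the level label is not needed, because the level register is part of the input index and is not part of the output.

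To reach normalization exactly $\sqrt{12dL}$ rather than the smaller value $\bigl(\sum_\ell\alpha_{\mathcal C_\ell}^2\bigr)^{1/2}$, one can use the fact that any larger normalization is admissible: a rotation by the ratio only lowers the successful amplitude. The bound $\|\mathcal E\|\le\alpha_{\mathcal E}$ holds automatically, because the successful block of a unitary has norm at most $1$.

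\textbf{Encoding $A_S$.} Since $A_S=\mathcal C^T\mathcal C$ by \eqref{compositelevels}, I would apply the adjoint rule to obtain an encoding of $\mathcal C^T$. The product rule, with the intermediate projection onto the common output embedding, then gives an exact encoding with normalization $\alpha_{\mathcal C}^2=12dL$. Uniformity in $c\ge0$ follows because every bound used, $\alpha_{\mathcal C_\ell}^2\le12d$ and $\omega_\ell\le h_\ell$, holds for all $c$. The gate count is at most $L$ times the polynomial per-level cost from Lemmas~\ref{lem:bpxtransferencoding} and~\ref{lem:bpxnormalizedencoding}, plus the arithmetic for the angles, which involves $c$.

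\textbf{Main obstacle.} I expect the main obstacle to be justifying the concatenation step rigorously with embeddings. The levels have different input sizes $(2^\ell-1)^d$ and are padded into $\lceil\log_2L\rceil+dL$ qubits. Two things must be checked: that the level-controlled circuits use compatible input isometries, so that the inactive level labels and padded indices give zero blocks, and that all the level encodings land in the same fine output embedding after their transfers. I would address both by fixing, once and for all, the fine output register layout (cell index, mode bits, direction or value selector, and flags). I would then verify that each level's transfer appends child-cell bits so that it lands in exactly this layout.
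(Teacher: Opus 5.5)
Your overall architecture matches the paper's proof: level encodings of $\mathcal C_\ell$ and $\mathcal E_\ell$, combination over levels with the concatenation parameters, then the adjoint plus a product with an intermediate projection for $A_S$. The bookkeeping $\sum_\ell\alpha_{\mathcal C_\ell}^2\le12dL$ and $\sum_\ell\omega_\ell^2\le1/3$ is also correct.

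The gap is in your concrete implementation of the concatenation. Specifically, you claim that no uncomputation of the level label is needed because the level register ``is part of the input index and is not part of the output.'' In a generalized block encoding, $\Pi_{\mathrm{out}}$ embeds the output space into the whole register. The level qubits must therefore either be fixed by it to a definite state or be carried as output indices. With no projection they are carried. A level-controlled circuit $\sum_\ell\ket{\ell-1}\bra{\ell-1}\otimes U_\ell$ followed by your flag rotations then has successful output $\sum_\ell\ket{\ell-1}\otimes\mathcal C_\ell\bm z_\ell/\alpha_{\mathcal C}$. That is an encoding of the block-diagonal map $\bigoplus_\ell\mathcal C_\ell$, not of the row $[\mathcal C_1\ \cdots\ \mathcal C_L]$, whose action is the coherent sum $\sum_\ell\mathcal C_\ell\bm z_\ell$.

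This is not cosmetic. Your $U_{\mathcal C}$ is block diagonal in the level register, so $U_{\mathcal C}^\dagger\Pi_{\mathrm{out}}\Pi_{\mathrm{out}}^\dagger U_{\mathcal C}$ is block diagonal as well. Your product step therefore returns $\bigoplus_\ell\mathcal C_\ell^T\mathcal C_\ell/(12dL)$, which contains only the diagonal blocks of $A_S$. It loses the interlevel couplings $\mathcal C_\ell^T\mathcal C_{\ell'}$, $\ell\ne\ell'$, in \eqref{bpxlevelsystem}. Likewise, your $\mathcal E$ circuit would output $\sum_\ell\ket{\ell-1}\otimes\mathcal E_\ell\bm z_\ell$ instead of $\mathcal E\bm z=E_L\bm u_h$. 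Your ``automatic'' norm bound would then concern $\|\bigoplus_\ell\mathcal E_\ell\|$, not $\|\mathcal E\|$.

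The missing step is a coherent projection of the level register onto a fixed weighted superposition, implemented by inverse state preparation and a success flag. This projection is what actually realizes the concatenation rule, and the amplitudes must be chosen together with it.
\begin{itemize}
\item For $\mathcal C$, the paper first raises every level normalization to the common value $\sqrt{12d}$, using flag amplitude $\alpha_{\mathcal C_\ell}/\sqrt{12d}$. It then projects onto $L^{-1/2}\sum_\ell\ket{\ell-1}$, which yields $\mathcal C\bm z/\sqrt{12dL}$.
\item For $\mathcal E$, it reweights each level to $h_\ell$ with amplitude $\omega_\ell/h_\ell$. It then projects onto $\sum_\ell h_\ell\ket{\ell-1}/(\sum_\ell h_\ell^2)^{1/2}$ and pads with amplitude $\sqrt{1-4^{-L}}$.
\item Projecting directly onto the normalized state proportional to $\sum_\ell\alpha_{\mathcal C_\ell}\ket{\ell-1}$, without reweighting, would also work.
\end{itemize}
Simply appending a uniform projection to your circuit while keeping your amplitudes $\alpha_{\mathcal C_\ell}/\alpha_{\mathcal C}$ does not fix it. That would give normalization $\sqrt L\,\alpha_{\mathcal C}$, and hence $\alpha_{A_S}=12dL^2$, one factor of $L$ too many.

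The embedding-compatibility issue you single out as the main obstacle is routine bookkeeping. The coherent summation over levels is the step that carries the off-diagonal structure of $A_S$, which is why the paper stresses that the intermediate projection is essential.
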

\begin{proof}
\noindent\textbf{Step 1: Combining the level energy maps.}
An additional flag rotation with successful amplitude $\alpha_{\mathcal C_\ell}/\sqrt{12d}$ raises each level normalization to $\sqrt{12d}$. Apply these encodings controlled on the level register of
\[
\bm{z}=\sum_{\ell=1}^L\ket{\ell-1}\otimes\bm{z}_\ell,
\]
and project that register onto $L^{-1/2}\sum_{\ell=1}^L\ket{\ell-1}$. The successful output is
\[
\frac1{\sqrt L}\sum_{\ell=1}^L
\frac{\mathcal C_\ell\bm{z}_\ell}{\sqrt{12d}}
=\frac{\mathcal C\bm{z}}{\sqrt{12dL}}.
\]
This projection is implemented coherently by inverse state preparation and a success flag. It gives an encoding $U_{\mathcal C}$ satisfying
\[
\Pi_{\mathrm{out}}^\dagger U_{\mathcal C}\Pi_{\mathrm{in}}
=\frac{\mathcal C}{\sqrt{12dL}}.
\]

\noindent\textbf{Step 2: Encoding the BPX matrix.}
Use the adjoint encoding for $\mathcal C^T$ and the product rule with separate flags and the intermediate success projection. The resulting successful block is
\[
\frac{\mathcal C^T}{\sqrt{12dL}}
\frac{\mathcal C}{\sqrt{12dL}}
=\frac{S^T(C_L^TC_L+cE_L^TE_L)S}{12dL}
=\frac{A_S}{12dL}.
\]
Thus $\alpha_{A_S}=12dL$. The intermediate projection is essential: the unprojected product $U_{\mathcal C}^\dagger U_{\mathcal C}$ would simply be the identity.

\noindent\textbf{Step 3: Encoding the $L^2$ solution representation map.}
For $\mathcal E_\ell$, a flag rotation with successful amplitude $\omega_\ell/h_\ell$ gives normalization $h_\ell$. Apply the level-controlled encodings and project the level register onto
\[
\frac{\sum_{\ell=1}^Lh_\ell\ket{\ell-1}}
{\big(\sum_{\ell=1}^Lh_\ell^2\big)^{1/2}}.
\]
The successful output is
\[
\frac1{\big(\sum_{\ell=1}^Lh_\ell^2\big)^{1/2}}
\sum_{\ell=1}^Lh_\ell\frac{\mathcal E_\ell\bm{z}_\ell}{h_\ell}
=\frac{\mathcal E\bm{z}}
{\big(\sum_{\ell=1}^Lh_\ell^2\big)^{1/2}}.
\]
Since $h_\ell=2^{-\ell}$,
\[
\sum_{\ell=1}^Lh_\ell^2=\frac{1-4^{-L}}3.
\]
A final flag rotation with successful amplitude $\sqrt{1-4^{-L}}$ therefore raises the normalization to $1/\sqrt3$. Also, the isometry identities give $\|\mathcal E_\ell\bm{z}_\ell\|_2=\omega_\ell\|\bm{z}_\ell\|_2$, and hence
\[
\|\mathcal E\bm{z}\|_2
\le\sum_{\ell=1}^L\omega_\ell\|\bm{z}_\ell\|_2
\le\Big(\sum_{\ell=1}^Lh_\ell^2\Big)^{1/2}\|\bm{z}\|_2
\le\frac1{\sqrt3}\|\bm{z}\|_2.
\]
The local circuits, controlled level selection, flag rotations, and level-state preparations have polynomial gate costs in the stated parameters.
\end{proof}

Amplifying the one-dimensional normalized value map before taking tensor products prevents a normalization loss in the nondifferentiated coordinates. Stacking the $d$ gradient components then gives the factor $\sqrt d$, and the final matrix normalization is $12dL$. In particular, no factor $3^{d-1}$ appears. The nodal BPX construction in \cite{DeimlPeterseim2025QFEM} also admits favorable encodings; the constants derived here apply to \eqref{bpxfactor}.


\section{Error and complexity analysis}\label{sec:accuracy}

We assume direct access to the preconditioned input states and analyze preparation of the solution state in the local $L^2$-orthonormal basis and estimation of a bounded linear functional.

\subsection{Solution-state preparation}\label{sec:recovery}

\noindent\textbf{Preconditioned right-hand side and oracle access.}
Following the input model of \cite[Section~5.1]{JinLiuMaYu2026BPXrevised}, define
\begin{equation}\label{inputrelations}
\bm{b}_S=S^T\bm{f}_{h,d},\qquad A_S\bm{z}=\bm{b}_S,\qquad \bm{z}=A_S^+\bm{b}_S.
\end{equation}
For $\bm{b}_S\ne0$, assume a state-preparation oracle
\[
O_{Sb}\ket0=\ket{\bm{b}_S}=\frac{\bm{b}_S}{\|\bm{b}_S\|_2},
\]
together with the supplied norm $\|\bm{b}_S\|_2$. The query bounds count calls to this oracle and to the multilevel block encodings, including their inverses and controlled versions. They do not include the construction of $O_{Sb}$ from the original load vector or from classical data. In particular, applying $S^T$ to a separately prepared load state is not part of the assumed input procedure. If $\bm{b}_S=0$, full row rank of $S$ implies $\bm{f}_{h,d}=0$, and the discrete solution is zero.

For later norm estimates, let $\bm{f}_h^{L^2}$ denote the coefficients of $\Pi_hf$ in the finest-level orthonormal basis. Since $E_L^T\bm{f}_h^{L^2}=\bm{f}_{h,d}$ and $\mathcal E=E_LS$, we have
\begin{equation}\label{preconditionedinputnorm}
\bm{b}_S=\mathcal E^T\bm{f}_h^{L^2},\qquad
\|\bm{b}_S\|_2\le\frac1{\sqrt3}\|\Pi_hf\|_{L^2}\le\frac1{\sqrt3}\|f\|_{L^2}.
\end{equation}
This identity bounds the supplied input norm; it is not an additional input-state preparation step.

\noindent\textbf{Approximation of the inverse.}
Theorems~\ref{thm:bpxspectrum} and \ref{thm:encoding} imply
\[
\sigma(A_S)\setminus\{0\}\subset[1/L,12dL],\qquad
\sigma\big(A_S/(12dL)\big)\setminus\{0\}\subset[1/(12dL^2),1].
\]
Quantum singular value transformation \cite{Gilyen2019QSVD} gives an odd inverse polynomial, written in the original matrix scale as $p_\delta$, satisfying
\begin{equation}\label{inverseapproximation}
\|p_\delta(A_S)-A_S^+\|\le\delta,\qquad p_\delta(0)=0.
\end{equation}
Its generalized $(2L,\,0)$-block-encoding in ideal arithmetic uses
\begin{equation}\label{inversequeries}
\mathcal{O}\bigl(dL^2\log(2+L/\delta)\bigr)
\end{equation}
queries to the encoding of $A_S$ and its inverse. Here $2L$ is the encoding normalization of the polynomial approximation, while $\delta$ measures its error relative to $A_S^+$. The condition $p_\delta(0)=0$ preserves the nullspace of the redundant BPX system.

Given $\varepsilon>0$, we seek a represented function $u_h^q$ satisfying
\begin{equation}\label{targetaccuracy}
\|u-u_h^q\|_{L^2}\le\varepsilon.
\end{equation}
For solution-state preparation, we consider the accuracy regime $0<\varepsilon\le\|u\|_{L^2}/2$. The solution norm enters the complexity analysis but is not required as an input to the algorithm. We allocate $\varepsilon/3$ each to finite element discretization, inverse approximation, and circuit implementation. By Lemma~\ref{lem:feerror}, a sufficient mesh condition is $3h^2\|u\|_{H^2}\le\varepsilon/3$.

Starting from the supplied preconditioned right-hand side, the coefficient transformations are
\[
\bm{b}_S\xrightarrow{\ A_S^+\ }\bm{z}
\xrightarrow{\ S\ }\bm{u}_h
\xrightarrow{\ E_L\ }E_L\bm{u}_h,
\qquad \mathcal E\bm{z}=E_L\bm{u}_h.
\]
The circuit combines the inverse encoding with the encoding of $\mathcal E$, using separate flags. With the exact pseudoinverse, the joint successful branch would be
\begin{equation}\label{solutionbranch}
\frac{\mathcal E A_S^+\bm{b}_S}{2L\alpha_{\mathcal E}\|\bm{b}_S\|_2}
=\frac{E_L\bm{u}_h}{(2L/\sqrt3)\|\bm{b}_S\|_2},
\qquad \alpha_{\mathcal E}=\frac1{\sqrt3}.
\end{equation}
Replacing $A_S^+$ by $p_\delta(A_S)$ defines
\begin{equation}\label{representative}
\bm{u}_h^q=Sp_\delta(A_S)\bm{b}_S,\qquad
u_h^q=\sum_{\bm{j}}(\bm{u}_h^q)_{\bm{j}}\phi_{\bm{j}}^{(L)}.
\end{equation}
The vector $\bm{u}_h^q$ in \eqref{representative} defines the nodal expansion of the represented function, but the circuit outputs its coefficients in the local $L^2$-orthonormal basis. Specifically, the successful branch is $E_L\bm{u}_h^q/[(2L/\sqrt3)\|\bm{b}_S\|_2]$, and its normalized state is
\[
\frac{E_L\bm{u}_h^q}{\|E_L\bm{u}_h^q\|_2}
=\frac{E_L\bm{u}_h^q}{\|u_h^q\|_{L^2}}.
\]
Multiplication of the successful branch by the known scale $(2L/\sqrt3)\|\bm{b}_S\|_2$ defines the ``unnormalized'' representative $E_L\bm{u}_h^q$. The quotation marks distinguish this scaled branch from the normalized output state. Its error satisfies
\[
\|E_L\bm{u}_h-E_L\bm{u}_h^q\|_2=\|u_h-u_h^q\|_{L^2}.
\]
This identity lets us retain the orthonormal representation when imposing continuous accuracy, without converting the output to the nodal coefficient state $\bm{u}_h^q/\|\bm{u}_h^q\|_2$. The function error is measured using the scaled branch, without requiring the exact solution norm.

\begin{theorem}[Solution error and query complexity]\label{thm:complexity}
Assume the stated preconditioned input access. Let $0<\varepsilon\le\|u\|_{L^2}/2$, choose a mesh satisfying $\|u-u_h\|_{L^2}\le\varepsilon/3$, and set
\begin{equation}\label{deltachoice}
\delta=\min\Big\{1,\frac{\varepsilon}{\sqrt3\|\bm{b}_S\|_2}\Big\}.
\end{equation}
Then $\bm{b}_S\ne0$, and the ideal representative in \eqref{representative} satisfies $\|u-u_h^q\|_{L^2}\le2\varepsilon/3$. With an additional function error of at most $\varepsilon/3$ from circuit implementation, the represented output satisfies \eqref{targetaccuracy}. Its normalized coefficient state in the finest-level local $L^2$-orthonormal basis can be prepared with a success flag and constant success probability using expected
\begin{equation}\label{queriesratio}
\mathcal{O}\!\Big(dL^3\frac{\|\bm{b}_S\|_2}{\|u_h\|_{L^2}}\log(2+L/\delta)\Big)
\end{equation}
queries. In particular, a bound in terms of the continuous data is
\begin{equation}\label{queriesdata}
\mathcal{O}\!\Big(dL^3\frac{\|f\|_{L^2}}{\|u\|_{L^2}}\log\!\Big(2+\frac{L\|f\|_{L^2}}{\varepsilon}\Big)\Big),\qquad L=\log_2(1/h).
\end{equation}
For fixed $\|f\|_{L^2}/\|u\|_{L^2}$, this bound has no algebraic inverse-mesh factor and depends logarithmically on $1/\varepsilon$, apart from the choice of mesh required for finite element accuracy.
\end{theorem}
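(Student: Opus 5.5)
The proof splits into three parts: the ideal error bound, the success probability and query count, and the translation to continuous data.

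\textbf{Ideal error bound.} First, $\bm{b}_S\ne0$. Since $\varepsilon\le\|u\|_{L^2}/2$ and $\|u-u_h\|_{L^2}\le\varepsilon/3$, we get $\|u_h\|_{L^2}\ge\|u\|_{L^2}-\varepsilon/3\ge 5\|u\|_{L^2}/6>0$. Hence $\bm{f}_{h,d}\ne0$, and full row rank of $S$ gives $\bm{b}_S\ne0$. Next, using $\bm{u}_h=SA_S^+\bm{b}_S$ from \eqref{bpxinverseidentity} and the isometry identity $\|E_L\bm{w}\|_2=\|w_h\|_{L^2}$, write
\[
\|u_h-u_h^q\|_{L^2}=\|\mathcal E(A_S^+-p_\delta(A_S))\bm{b}_S\|_2\le\|\mathcal E\|\,\delta\,\|\bm{b}_S\|_2\le\frac{\delta\|\bm{b}_S\|_2}{\sqrt3},
\]
by Theorem~\ref{thm:encoding} and \eqref{inverseapproximation}. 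The choice \eqref{deltachoice} makes the right-hand side at most $\varepsilon/3$, so the triangle inequality gives $\|u-u_h^q\|_{L^2}\le2\varepsilon/3$. The circuit-implementation error is handled by the product rule of Lemma~\ref{lem:arithmeticBE}. An operator-norm error $\eta$ in the combined successful block contributes a function error of at most $(2L/\sqrt3)\|\bm{b}_S\|_2\,\eta$. Choosing $\eta$ so that this is at most $\varepsilon/3$ gives \eqref{targetaccuracy}. This costs only polylogarithmic gate overhead and no extra queries.

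\textbf{Success probability and query count.} By \eqref{solutionbranch}, the success amplitude is $\|u_h^q\|_{L^2}/[(2L/\sqrt3)\|\bm{b}_S\|_2]$. I would bound $\|u_h^q\|_{L^2}$ from below by $\|u_h\|_{L^2}-\varepsilon/3\ge\|u_h\|_{L^2}/2$, using $\|u_h\|_{L^2}\ge5\|u\|_{L^2}/6\ge5\varepsilon/3$. Fixed-point or variable-time amplitude amplification then needs expected $\mathcal O(L\|\bm{b}_S\|_2/\|u_h\|_{L^2})$ repetitions. Since $\|u_h\|_{L^2}$ is not known, I would use the standard exponential-search variant, which is why the count is an expectation. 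Each repetition uses \eqref{inversequeries}, that is $\mathcal O(dL^2\log(2+L/\delta))$ queries, together with one call to $O_{Sb}$ and one to $\mathcal E$. Multiplying gives \eqref{queriesratio}.

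\textbf{Translation to continuous data.} Use \eqref{preconditionedinputnorm} for $\|\bm{b}_S\|_2\le\|f\|_{L^2}/\sqrt3$ and $\|u_h\|_{L^2}\ge5\|u\|_{L^2}/6$ for the denominator. For the logarithm, $L/\delta\le L\max\{1,\|f\|_{L^2}/\varepsilon\}$, so $\log(2+L/\delta)=\mathcal O(\log(2+L\|f\|_{L^2}/\varepsilon))$. This yields \eqref{queriesdata}.

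The main obstacle is the amplification step with an unknown success amplitude and the bookkeeping that keeps the lower bound on $\|u_h^q\|_{L^2}$ uniform. This is where the restriction $\varepsilon\le\|u\|_{L^2}/2$ is genuinely used. A secondary point is to check that the circuit-precision requirement does not reintroduce mesh-dependent query factors.
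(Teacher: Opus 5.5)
Your proposal follows the paper's proof closely. You obtain $\bm{b}_S\ne0$ from $\|u_h\|_{L^2}\ge\frac56\|u\|_{L^2}$ and control the inverse error through $\|\mathcal E\|\le1/\sqrt3$. You bound the successful branch below via $\|u_h^q\|_{L^2}\gtrsim\|u_h\|_{L^2}$, multiply exponential-search amplitude amplification by the QSVT cost \eqref{inversequeries}, and take circuit tolerance $\eta\sim\varepsilon/(L\|\bm{b}_S\|_2)$.

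One step does not follow as written, in the translation to continuous data. From $L/\delta\le L\max\{1,\|f\|_{L^2}/\varepsilon\}$ you conclude $\log(2+L/\delta)=\mathcal O(\log(2+L\|f\|_{L^2}/\varepsilon))$. This implication needs $\|f\|_{L^2}/\varepsilon$ to be bounded away from zero, because $\log(2+L\max\{1,x\})$ is not $\mathcal O(\log(2+Lx))$ uniformly in $x>0$. For small $x$ the left side stays of order $\log(2+L)$, while the right side tends to $\log2$.

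The paper supplies the missing fact from the discrete spectrum. Since $\Pi_hf=\mathcal A_hu_h$, and the lower bound in \eqref{generalizedspectrum} gives $\|\mathcal A_hv_h\|_{L^2}\ge(c+4d)\|v_h\|_{L^2}$, one has $\|f\|_{L^2}\ge\|\Pi_hf\|_{L^2}\ge(c+4d)\|u_h\|_{L^2}$. Combined with your own bound $\|u_h\|_{L^2}\ge\frac53\varepsilon$, this gives $\|f\|_{L^2}>\varepsilon$. The maximum therefore equals $\|f\|_{L^2}/\varepsilon$, and the logarithm in \eqref{queriesdata} follows.

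A smaller omission is that the branch lower bound should be checked for the implemented representative, not only the ideal one. Your constants give $\|u_h\|_{L^2}-2\varepsilon/3\ge\frac35\|u_h\|_{L^2}$, as in the paper's final step, so the amplification count is unaffected.
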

\begin{proof}
\noindent\textbf{Step 1: Continuous error from the inverse approximation.}
The discretization error and the restriction on $\varepsilon$ imply
\[
\|u_h\|_{L^2}\ge\|u\|_{L^2}-\|u-u_h\|_{L^2}
\ge\|u\|_{L^2}-\frac{\varepsilon}{3}\ge\frac56\|u\|_{L^2}>0.
\]
Consequently, $\bm{b}_S\ne0$. The orthonormal coefficient representation and $\|\mathcal E\|\le1/\sqrt3$ give
\begin{equation}\label{continuouserror}
\begin{aligned}
\|u_h-u_h^q\|_{L^2}
&=\|\mathcal E(A_S^+-p_\delta(A_S))\bm{b}_S\|_2\\
&\le\frac{\delta}{\sqrt3}\|\bm{b}_S\|_2\le\frac{\varepsilon}{3}.
\end{aligned}
\end{equation}
Adding the discretization error gives the ideal function error bound.

\noindent\textbf{Step 2: Size of the successful branch.}
Using \eqref{preconditionedinputnorm}, we obtain
\begin{equation}\label{solutionnorm}
\frac{\|\bm{b}_S\|_2}{\|u_h\|_{L^2}}
\le\frac{\|f\|_{L^2}}{\sqrt3(\|u\|_{L^2}-\varepsilon/3)}
\le\frac{6\|f\|_{L^2}}{5\sqrt3\|u\|_{L^2}}.
\end{equation}
Moreover, $\varepsilon/3\le\|u\|_{L^2}/6\le\|u_h\|_{L^2}/5$, so \eqref{continuouserror} implies
\[
\|u_h^q\|_{L^2}\ge\frac45\|u_h\|_{L^2},\qquad
\|u_h^q\|_{L^2}\ge\|u\|_{L^2}-\frac{2\varepsilon}{3}\ge\frac23\|u\|_{L^2}.
\]
The successful branch therefore has norm at least
\[
\frac{\sqrt3\|u_h^q\|_{L^2}}{2L\|\bm{b}_S\|_2}
\ge\frac{2\sqrt3\|u_h\|_{L^2}}{5L\|\bm{b}_S\|_2}.
\]
Thus the absolute accuracy requirement already ensures a sufficiently large branch; no inverse estimate involving $h^{-1}$ is needed.

\noindent\textbf{Step 3: Amplification cost.}
Amplitude amplification with a success test \cite{Brassard2002Amplitude} uses expected
\[
\mathcal{O}\!\Big(L\frac{\|\bm{b}_S\|_2}{\|u_h\|_{L^2}}\Big)
\]
calls to the response circuit and its inverse. Geometrically increasing amplification lengths give this expected order without knowing the success probability. Multiplying by \eqref{inversequeries} proves \eqref{queriesratio}. Also, $\Pi_hf=\mathcal A_hu_h$ and \eqref{generalizedspectrum} imply
\[
\|f\|_{L^2}\ge\|\Pi_hf\|_{L^2}\ge(c+4d)\|u_h\|_{L^2}>\varepsilon.
\]
Consequently, \eqref{deltachoice} and \eqref{preconditionedinputnorm} give
\[
\frac{L}{\delta}
=L\max\Big\{1,\frac{\sqrt3\|\bm{b}_S\|_2}{\varepsilon}\Big\}
\le\frac{L\|f\|_{L^2}}{\varepsilon}.
\]
Together with \eqref{solutionnorm}, this proves \eqref{queriesdata}.

\noindent\textbf{Step 4: Circuit implementation error.}
An operator error $\eta$ in the response unitary perturbs the scaled successful branch by at most $(2L/\sqrt3)\|\bm{b}_S\|_2\eta$. It suffices to choose
\begin{equation}\label{circuitprecision}
\eta\le\frac{\varepsilon}{2\sqrt3 L\|\bm{b}_S\|_2},
\end{equation}
which makes the corresponding function perturbation at most $\varepsilon/3$. Including both inverse and implementation errors, the implemented representative has norm at least
\[
\|u_h\|_{L^2}-\frac{2\varepsilon}{3}\ge\frac35\|u_h\|_{L^2},\qquad
\|u\|_{L^2}-\varepsilon\ge\frac12\|u\|_{L^2}.
\]
Its total continuous error is at most $\varepsilon$, and its successful branch retains the size required in Step~3. Primitive errors within the response circuit are allocated by telescoping to satisfy \eqref{circuitprecision}. Amplitude amplification uses this implemented unitary and its circuit adjoint, together with reflections about the input and success subspaces, and preserves the direction of its successful branch. Here the state-preparation circuit includes $O_{Sb}$: the input reflection is about the all-zero state, and the success reflection acts on the encoding flags. Both are exact in the circuit model. The representative is defined by the scaled branch before amplification, as in \eqref{representative}. The precision requirements change the elementary gate cost but not the stated query bounds.
\end{proof}

The ratio $\|f\|_{L^2}/\|u\|_{L^2}$ in \eqref{queriesdata} is part of the complexity estimate. For a fixed continuous problem with $u\ne0$, this ratio is independent of $h$, so the mesh dependence is through $L=\log_2(1/h)$. It may nevertheless grow for a family of increasingly oscillatory right-hand sides. Multiplying $f$ and $u$ by the same positive constant leaves this ratio unchanged. The condition $\varepsilon\le\|u\|_{L^2}/2$ specifies the accuracy regime of the theorem; neither the inverse tolerance nor the adaptive amplification procedure requires the exact solution norm.

Direct access to $\ket{\bm{b}_S}$ removes the cost of forming the preconditioned right-hand side. The conversion from multilevel coordinates to the original solution representation still incurs the amplification factor in \eqref{queriesratio}, now controlled by \eqref{solutionnorm}. If only $\ket{\bm{z}}$ is required, then $\|\bm{b}_S\|_2/\|\bm{z}\|_2\le12dL$, so its normalized-state preparation requires no condition relating the requested accuracy to $\|u\|_{L^2}$. The explicit tensor mass factor $3^{d-1}$ is absent from both constructions. Each query has the polynomial gate overhead stated in Theorem~\ref{thm:encoding}. At finite gate precision the represented function may lie in $\mathcal Q_L$ rather than exactly in $V_h$; its error is still measured in $L^2$. Estimating its norm or reading out all nodal coefficients incurs additional costs.

\subsection{Estimation of linear functionals}\label{subsec:linearfunctionals}

For a prescribed real-valued $g\in L^2(\Omega)$, let
\begin{equation}\label{linearfunctional}
J(u)=(g,u),\qquad J(u_h)=(g,u_h).
\end{equation}
Define the load vector $(\bm{g}_{h,d})_{\bm{j}}=(g,\phi_{\bm{j}}^{(L)})$ and the preconditioned test vector $\bm{c}_S=S^T\bm{g}_{h,d}$. Then
\begin{equation}\label{linearresponse}
J(u_h)=\bm{g}_{h,d}^T\bm{u}_h=\bm{c}_S^TA_S^+\bm{b}_S.
\end{equation}
Following \cite[Section~5.3.2]{JinLiuMaYu2026BPXrevised}, assume direct access to
\[
O_{Sc}\ket0=\ket{\bm{c}_S}=\frac{\bm{c}_S}{\|\bm{c}_S\|_2},
\]
with $\|\bm{c}_S\|_2$ supplied. Construction of this oracle is outside the query model. As in \eqref{preconditionedinputnorm},
\[
\|\bm{c}_S\|_2\le\frac1{\sqrt3}\|\Pi_hg\|_{L^2}\le\frac1{\sqrt3}\|g\|_{L^2}.
\]
If either preconditioned vector is zero, $J(u_h)=0$ and no quantum estimation of the discrete functional is needed. Otherwise, we estimate \eqref{linearresponse} directly, without applying $\mathcal E$ or amplifying a solution branch.

\begin{theorem}[Functional error and query complexity]\label{thm:linearfunctional}
Let $u\in H^2(\Omega)$ be the solution of \eqref{poisson}, let $g\in L^2(\Omega)$, and assume the stated preconditioned input access. Given $\varepsilon>0$, choose the mesh so that
\begin{equation}\label{linearfeaccuracy}
3h^2\|u\|_{H^2}\|g\|_{L^2}\le\frac{\varepsilon}{3}.
\end{equation}
A direct matrix-element estimator returns $\widehat J$ satisfying $|\widehat J-J(u)|\le\varepsilon$ with probability at least $2/3$, using
\begin{equation}\label{linearqueries}
\mathcal{O}\!\Big(dL^2\Big[1+\frac{L\|f\|_{L^2}\|g\|_{L^2}}{\varepsilon}\Big]\log\!\Big[2+L\max\Big\{1,\frac{\|f\|_{L^2}\|g\|_{L^2}}{\varepsilon}\Big\}\Big]\Big)
\end{equation}
queries to the multilevel encodings and the two input oracles. Equivalently, suppressing the logarithmic factor, the bound is
\begin{equation}\label{linearqueriessimplified}
\widetilde{\mathcal{O}}\!\Big(dL^2+\frac{dL^3\|f\|_{L^2}\|g\|_{L^2}}{\varepsilon}\Big),\qquad L=\log_2(1/h).
\end{equation}
The second term dominates when $\varepsilon\le L\|f\|_{L^2}\|g\|_{L^2}$. No restriction relating $\varepsilon$ to $\|u\|_{L^2}$ is required.
\end{theorem}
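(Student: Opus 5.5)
The error is split into three parts of size $\varepsilon/3$: discretization, inverse approximation, and statistical estimation.

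\textbf{Discretization.} Since $\|g\|_{L^2}$ controls the functional, Lemma~\ref{lem:feerror} and \eqref{linearfeaccuracy} give
\[
|J(u)-J(u_h)|\le\|g\|_{L^2}\|u-u_h\|_{L^2}\le\varepsilon/3 .
\]

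\textbf{Inverse approximation.} Choose
\[
\delta=\min\{1,\ 3\varepsilon/(\|f\|_{L^2}\|g\|_{L^2})\}
\]
(any constant works) and define the surrogate $\widetilde J=\bm c_S^Tp_\delta(A_S)\bm b_S$. By \eqref{linearresponse}, \eqref{inverseapproximation}, \eqref{preconditionedinputnorm}, and the bound on $\|\bm c_S\|_2$,
\[
|J(u_h)-\widetilde J|\le\delta\|\bm c_S\|_2\|\bm b_S\|_2\le\delta\|f\|_{L^2}\|g\|_{L^2}/3\le\varepsilon/3 .
\]
Using $p_\delta(0)=0$ together with $\bm b_S,\bm c_S\in\range(S^T)=\range(A_S)$, the pseudoinverse can be replaced by the polynomial without any nullspace contribution. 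If $\varepsilon\ge\|f\|_{L^2}\|g\|_{L^2}$, then $\delta=1$ and the bound still holds.

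\textbf{Matrix-element estimation.} Form the unitary
\[
W=(O_{Sc}^\dagger\otimes I)\,U_{p_\delta}\,(O_{Sb}\otimes I),
\]
where $U_{p_\delta}$ is the generalized $(2L,0)$-block encoding of $p_\delta(A_S)$. Then
\[
\bra0 W\ket0=\bm c_S^Tp_\delta(A_S)\bm b_S\big/\bigl(2L\|\bm b_S\|_2\|\bm c_S\|_2\bigr)=:a ,
\]
which is real because $p_\delta$ and the inputs are real. Use a Hadamard test on $W$, so that $a$ is encoded in a success probability $(1+a)/2$. Then apply amplitude estimation to obtain $a$ to additive accuracy $\eta$ with probability at least $2/3$, using $\mathcal O(1+1/\eta)$ calls to controlled $W$ and $W^\dagger$. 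The target is
\[
2L\|\bm b_S\|_2\|\bm c_S\|_2\,\eta\le\varepsilon/3 ,
\]
which holds for $\eta=\varepsilon/(2L\|f\|_{L^2}\|g\|_{L^2})$ (up to constants). This gives $\mathcal O(1+L\|f\|_{L^2}\|g\|_{L^2}/\varepsilon)$ uses of $W$. Each use costs $\mathcal O(dL^2\log(2+L/\delta))$ queries by \eqref{inversequeries}, and $L/\delta\le L\max\{1,3\|f\|_{L^2}\|g\|_{L^2}/\varepsilon\}$. Multiplying yields \eqref{linearqueries}, and \eqref{linearqueriessimplified} is immediate. The estimator is $\widehat J=2L\|\bm b_S\|_2\|\bm c_S\|_2\,\widehat a$, and the triangle inequality closes the $\varepsilon$ bound. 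No lower bound on $\|u\|_{L^2}$ is needed because there is no normalization of a solution branch.

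\textbf{Expected obstacle.} The delicate point is the bookkeeping that keeps the estimation scale as $L\|\bm b_S\|_2\|\bm c_S\|_2$ and not the naive $\|B\|\|K\|$. This uses the $(2L,0)$ normalization of the inverse encoding, and it also requires that the statistical error and the circuit-implementation error both fit into the allocated budget. I would absorb implementation error by reducing $\eta$ by a constant factor, as in Step~4 of Theorem~\ref{thm:complexity}.
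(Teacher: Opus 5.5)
Your proposal follows the same route as the paper's proof. The discretization error is controlled through $\|g\|_{L^2}$ and Lemma~\ref{lem:feerror}, the inverse error is $\delta\|\bm{b}_S\|_2\|\bm{c}_S\|_2$ by \eqref{inverseapproximation}, and a Hadamard test with amplitude estimation is applied to $O_{Sc}^\dagger U_{p_\delta}O_{Sb}$ at scale $2L\|\bm{b}_S\|_2\|\bm{c}_S\|_2$. You then multiply by \eqref{inversequeries} and use $\|\bm{b}_S\|_2\|\bm{c}_S\|_2\le\|f\|_{L^2}\|g\|_{L^2}/3$. Three bookkeeping points need repair.

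\textbf{The inverse tolerance.} Your choice $\delta=\min\{1,3\varepsilon/(\|f\|_{L^2}\|g\|_{L^2})\}$ only gives $\delta\|f\|_{L^2}\|g\|_{L^2}/3\le\varepsilon$, not $\varepsilon/3$. You need $\delta\le\varepsilon/(\|f\|_{L^2}\|g\|_{L^2})$ for that share.

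\textbf{The error budget.} Three full shares of $\varepsilon/3$ leave nothing for implementation error. Rebalance as the paper does: $\varepsilon/3$ for discretization, $\varepsilon/6$ for the inverse, $\varepsilon/6$ for implementation, and $\varepsilon/3$ for readout. The implemented circuit then needs its own operator-error tolerance, for example $\min\{1,\varepsilon/(12L\|\bm{b}_S\|_2\|\bm{c}_S\|_2)\}$. Shrinking the statistical precision alone does not bound the implementation error.

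\textbf{The input model.} The stated access supplies $\|\bm{b}_S\|_2$ and $\|\bm{c}_S\|_2$, but not $\|f\|_{L^2}$ or $\|g\|_{L^2}$. Set $\delta$ and the precisions from the supplied norms, for example $\delta=\min\{1,\varepsilon/(6\|\bm{b}_S\|_2\|\bm{c}_S\|_2)\}$. Use the data bound only when converting the cost to \eqref{linearqueries}; this gives $1/\delta\le\max\{1,2\|f\|_{L^2}\|g\|_{L^2}/\varepsilon\}$.

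\textbf{The degenerate case.} Treat $\|\bm{b}_S\|_2\|\bm{c}_S\|_2=0$ separately by returning $\widehat J=0$. In that case the oracles and your rescaling by $2L\|\bm{b}_S\|_2\|\bm{c}_S\|_2$ are undefined.
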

\begin{proof}
\noindent\textbf{Step 1: Discretization and inverse errors.}
Lemma~\ref{lem:feerror} and \eqref{linearfeaccuracy} give
\[
|J(u)-J(u_h)|\le\|g\|_{L^2}\|u-u_h\|_{L^2}
\le3h^2\|u\|_{H^2}\|g\|_{L^2}\le\frac{\varepsilon}{3}.
\]
If $\|\bm{b}_S\|_2\|\bm{c}_S\|_2=0$, then $J(u_h)=0$, and returning $\widehat J=0$ already satisfies the required accuracy. Otherwise, choose
\begin{equation}\label{linearinverseaccuracy}
\delta=\min\Big\{1,\frac{\varepsilon}{6\|\bm{b}_S\|_2\|\bm{c}_S\|_2}\Big\},
\end{equation}
and let $J_\delta=\bm{c}_S^Tp_\delta(A_S)\bm{b}_S$. By \eqref{inverseapproximation},
\begin{equation}\label{linearalgebraicerror}
|J(u_h)-J_\delta|\le\delta\|\bm{b}_S\|_2\|\bm{c}_S\|_2\le\varepsilon/6.
\end{equation}

\noindent\textbf{Step 2: Matrix-element estimation.}
Apply $O_{Sb}$, the inverse block encoding, and $O_{Sc}^\dagger$ on the multilevel register. With all inverse-encoding flags retained, the selected matrix element is
\begin{equation}\label{linearoverlap}
\frac{J_\delta}{2L\|\bm{b}_S\|_2\|\bm{c}_S\|_2}.
\end{equation}
A Hadamard test followed by amplitude estimation \cite{Brassard2002Amplitude} estimates its real part, including the sign. Additive accuracy $\varepsilon/(6L\|\bm{b}_S\|_2\|\bm{c}_S\|_2)$ gives scalar readout error at most $\varepsilon/3$ with constant success probability, using
\[
\mathcal{O}\!\Big(1+\frac{L\|\bm{b}_S\|_2\|\bm{c}_S\|_2}{\varepsilon}\Big)
\]
calls to the inverse circuit and the input oracles.

\noindent\textbf{Step 3: Implementation error and total cost.}
Reserve $\varepsilon/6$ for circuit implementation. A sufficient operator-error bound for the matrix-element circuit is
\[
\eta\le\min\Big\{1,\frac{\varepsilon}{12L\|\bm{b}_S\|_2\|\bm{c}_S\|_2}\Big\},
\]
since rescaling multiplies its matrix-element error by $2L\|\bm{b}_S\|_2\|\bm{c}_S\|_2$. Primitive errors, including those in amplitude estimation, are allocated over the circuit calls by telescoping. The four contributions satisfy
\[
|J(u)-\widehat J|\le\frac{\varepsilon}{3}+\frac{\varepsilon}{6}+\frac{\varepsilon}{6}+\frac{\varepsilon}{3}=\varepsilon.
\]
Multiplying the readout cost by \eqref{inversequeries} gives
\[
\mathcal{O}\!\Big(dL^2\Big[1+\frac{L\|\bm{b}_S\|_2\|\bm{c}_S\|_2}{\varepsilon}\Big]\log(2+L/\delta)\Big).
\]
To express this bound in terms of continuous data, use \eqref{preconditionedinputnorm} and the corresponding estimate for $\bm{c}_S$:
\[
\|\bm{b}_S\|_2\|\bm{c}_S\|_2\le\frac13\|f\|_{L^2}\|g\|_{L^2},\qquad
\frac1\delta\le\max\Big\{1,\frac{2\|f\|_{L^2}\|g\|_{L^2}}{\varepsilon}\Big\}.
\]
Substitution proves \eqref{linearqueries}, and suppressing its logarithmic factor yields \eqref{linearqueriessimplified}.
\end{proof}

The exact solution enters the mesh condition through $\|u\|_{H^2}$, while the query bound depends on the continuous data norms $\|f\|_{L^2}$ and $\|g\|_{L^2}$. The inverse tolerance and estimation precision are chosen from the supplied norms of $\bm{b}_S$ and $\bm{c}_S$; no additional norm oracle for $f$, $g$, or $u$ is assumed. For fixed data norms, the query bound contains neither $3^{d-1}$ nor an algebraic inverse-mesh factor. The mesh enters through $L=\log_2(1/h)$, while the $\varepsilon^{-1}$ readout cost remains explicit. This conclusion assumes direct access to both preconditioned input states; their construction is not free in a complete algorithm.

The result applies to bounded $L^2$ functionals. Point evaluation, boundary fluxes, and mesh-dependent test functions require separate bounds. Data norms, input preparation, and gate overhead may depend on dimension even though $C_{\mathrm{FE}}=3$ is dimension independent. For solution-state preparation, \eqref{solutionnorm} controls the amplification cost by $\|f\|_{L^2}/\|u\|_{L^2}$. Direct scalar estimation avoids this conversion and requires no restriction relating $\varepsilon$ to the solution norm.

\section{Numerical illustration}\label{sec:numerics}

We illustrate the construction on $\Omega=(0,1)$ using
\begin{equation}\label{numericalexample}
\begin{aligned}
u(x)&=\sin(\pi x)+\tfrac14\sin(2\pi x),\\
f(x)&=(\pi^2+c)\sin(\pi x)+\tfrac14(4\pi^2+c)\sin(2\pi x).
\end{aligned}
\end{equation}
We use $c=1$ for the main experiment. Here $\|u\|_{L^2}=\sqrt{17/32}$, so the target accuracy used below satisfies $\varepsilon\le\|u\|_{L^2}/2$. The two sine modes test the relative amplitudes and the overall scale of the represented solution. Load integrals are evaluated analytically, and continuous errors are computed by 16-point Gauss--Legendre quadrature on each cell. We first report classical discretization and conditioning results, then examine the quantum circuit on a fixed mesh.

\subsection{Finite element accuracy and preconditioning}

Table~\ref{tab:classicalexperiment} reports the finite element error and matrix condition numbers for $L=2,\cdots,7$, with $h=2^{-L}$ and $N=2^L-1$. The effective condition number of $A_S$ uses only its nonzero eigenvalues. The last column compares $\|B\|\|K_{h,1}\|$, the normalization obtained by the plain product rule if each separate factor is encoded with its spectral norm, with the bound $12L$ from Theorem~\ref{thm:encoding}.

\begin{table}[htbp]
\centering
\caption{Classical matrix and finite element results for \eqref{numericalexample} with $c=1$. }
\label{tab:classicalexperiment}
\small
\begin{tabular}{rrrrrr}
\toprule
$L$ & $\|u-u_h\|_{L^2}$ & Rate & $\kappa_2(K_{h,1})$ & $\kappa_{\mathrm{eff}}(A_S)$ & $\|B\|\|K_{h,1}\|/(12L)$\\
\midrule
2 & $5.214\times10^{-2}$ & -- & 5.359 & 2.908 & 0.672 \\
3 & $1.331\times10^{-2}$ & 1.970 & 23.009 & 3.949 & 1.938 \\
4 & $3.345\times10^{-3}$ & 1.992 & 93.662 & 4.893 & 5.924 \\
5 & $8.374\times10^{-4}$ & 1.998 & 376.284 & 5.701 & 19.044 \\
6 & $2.094\times10^{-4}$ & 2.000 & 1506.776 & 6.337 & 63.555 \\
7 & $5.236\times10^{-5}$ & 2.000 & 6028.743 & 6.861 & 217.964 \\
\bottomrule
\end{tabular}
\end{table}

Figure~\ref{fig:qbpx-classical} displays the error and conditioning trends in Table~\ref{tab:classicalexperiment}. The observed convergence approaches second order. Over these meshes, $\kappa_2(K_{h,1})$ increases from $5.36$ to $6028.74$, while $\kappa_{\mathrm{eff}}(A_S)$ increases from $2.91$ to $6.86$. The last column illustrates the normalization loss discussed in \eqref{directproductnormalization}. The bound $12L$ is conservative, so the separate-factor normalization can be smaller on the coarsest mesh; its growth under refinement is the relevant distinction. Additional matrix checks with $c=0$ and $c=100$ give effective condition numbers $6.547$ and $6.495$, respectively, at $L=7$. All 18 tested pairs of $c\in\{0,1,100\}$ and $L\in\{2,\cdots,7\}$ satisfy the spectral interval in \eqref{bpxspectralbound}. These one-dimensional checks illustrate the estimates and do not test their high-dimensional scaling.

\begin{figure}[htbp]
\centering
\subfigure[Finite element convergence.\label{fig:qbpx-fe}]{%
\includegraphics[width=0.4\textwidth]{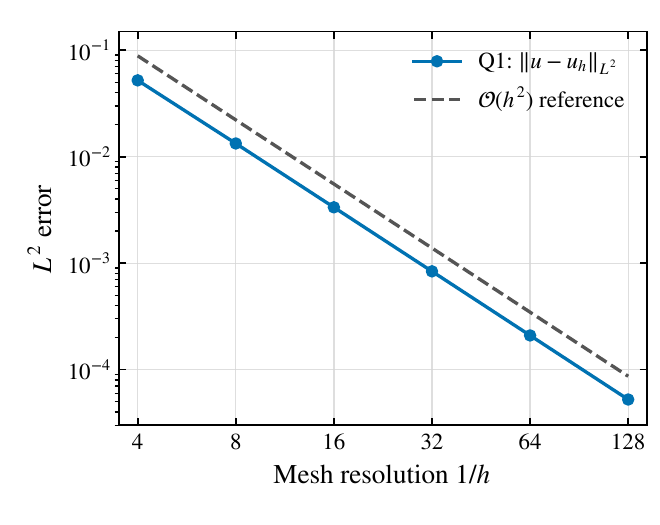}}
\subfigure[Matrix condition numbers.\label{fig:qbpx-condition}]{%
\includegraphics[width=0.4\textwidth]{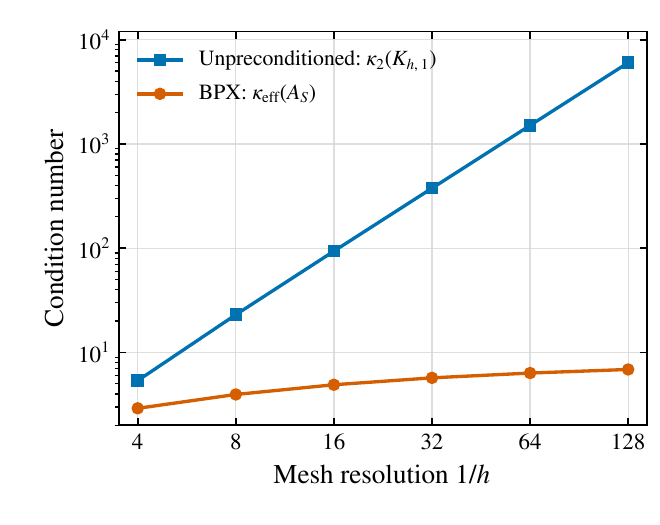}}
\caption{Finite element convergence and conditioning for \eqref{numericalexample} with $c=1$. }
\label{fig:qbpx-classical}
\end{figure}

\subsection{Circuit simulation in UnitaryLab}

The circuit experiment uses UnitaryLab 1.1.6 \cite{UnitaryLab116}, Python 3.11.15, and the NumPy CPU backend with complex128 arithmetic. We fix $L=4$ and $c=1$. The nodal system has 15 unknowns, the multilevel system has 26 coordinates, and the finest-level $L^2$-orthonormal representation has 32 coefficients. The multilevel register is padded to 32 dimensions. For numerical validation, this experiment explicitly includes a right-hand-side conversion before the inverse: it starts from $\ket{\bm{f}_h^{L^2}}$ and applies the encoding of $\mathcal E^T$, whose successful branch is $\bm{b}_S/(\alpha_{\mathcal E}\|\Pi_hf\|_{L^2})$. It then applies the inverse and the encoding of $\mathcal E$. Both value-map encodings have normalization $1/\sqrt3$ and separate success flags. This extra input conversion tests the identity $\mathcal E^T\bm{f}_h^{L^2}=\bm{b}_S$; it is not a step charged to the direct-input algorithm of Section~\ref{sec:accuracy}.

For this small reference experiment, the inverse is implemented using phase estimation \cite[Chapter~5]{Nielsen2010} with $m$ phase qubits for $\exp(2\pi\i A_S/(12L))$. A phase-register value $j$ represents $\lambda_j=12Lj/2^m$. A controlled rotation gives a success amplitude
\[
r_j=\begin{cases}
0,&\lambda_j<(2L)^{-1},\\
(2L\lambda_j)^{-1},&\lambda_j\ge(2L)^{-1}.
\end{cases}
\]
Phase estimation is then uncomputed. Let $R_m/(2L)$ denote the inverse block selected by the zero phase register and the successful rotation flag. The entire response circuit uses $m+8$ qubits. Its selected spatial branch is multiplied by the known scale $2L\alpha_{\mathcal E}^2\|\Pi_hf\|_{L^2}$ to obtain $E_L\bm{u}_h^q=\mathcal E R_m\bm{b}_S$. Thus the represented function agrees with that obtained from a directly supplied $\ket{\bm{b}_S}$ and the same inverse approximation. The tabulated qubit counts and joint success probabilities include the additional input conversion; they are not those of the direct-input circuit in \eqref{solutionbranch}. The exact solution norm is not used in the rescaling.

The controlled propagators and unitary dilations of the value maps are supplied as dense matrix gates. The experiment therefore tests the inverse and $L^2$ solution representation in a finite-dimensional oracle model. The local block-encoding circuits and QSVT inverse used in the complexity analysis are not implemented in this simulation. The classical finite element solution provides the reference for evaluating errors.

\begin{table}[htbp]
\centering
\caption{UnitaryLab response-circuit results at $L=4$ and $c=1$. Here $\delta_m=\|R_m-A_S^+\|$, and the last column is the joint success probability before amplification.}
\label{tab:quantumexperiment}
\small
\begin{tabular}{rrrrrr}
\toprule
$m$ & Qubits & $\delta_m$ & $\|u_h-u_h^q\|_{L^2}$ & $\|u-u_h^q\|_{L^2}$ & Success probability\\
\midrule
6 & 14 & $1.500\times10^{-2}$ & $2.037\times10^{-2}$ & $2.281\times10^{-2}$ & $6.363\times10^{-4}$ \\
8 & 16 & $1.951\times10^{-3}$ & $3.830\times10^{-3}$ & $3.172\times10^{-3}$ & $6.797\times10^{-4}$ \\
10 & 18 & $1.735\times10^{-3}$ & $3.334\times10^{-3}$ & $3.061\times10^{-3}$ & $6.787\times10^{-4}$ \\
12 & 20 & $4.037\times10^{-4}$ & $5.990\times10^{-4}$ & $2.889\times10^{-3}$ & $6.739\times10^{-4}$ \\
\bottomrule
\end{tabular}
\end{table}

Table~\ref{tab:quantumexperiment} separates the circuit error from the total continuous error. At $m=12$, these errors are $5.9900\times10^{-4}$ and $2.8893\times10^{-3}$, respectively, while the finite element error is $3.3451\times10^{-3}$. Since $u-u_h^q=(u-u_h)+(u_h-u_h^q)$, partial cancellation can make the total error smaller than the finite element error. For the target $\varepsilon=1.2\times10^{-2}$, the measured discretization and circuit contributions are each below $\varepsilon/3$ for $m\ge8$. All four response vectors agree with an independent evaluation of the phase-estimation spectral filter to within $2\times10^{-14}$ in Euclidean norm.

\begin{figure}[htbp]
\centering
\subfigure[Pointwise absolute errors.\label{fig:qbpx-pointwise}]{%
\includegraphics[width=0.4\textwidth]{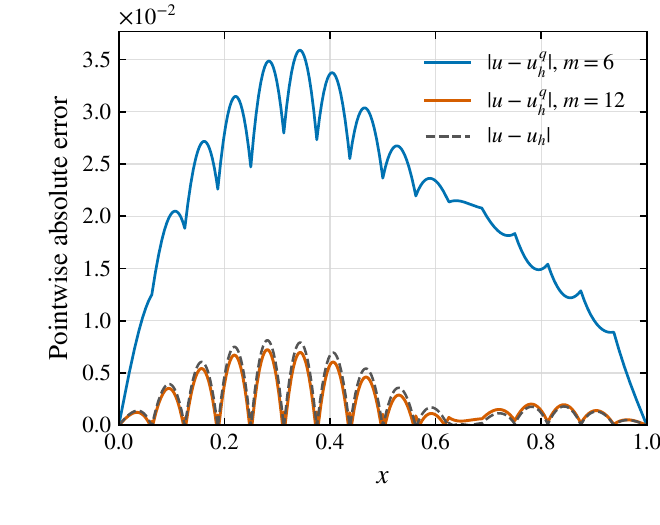}} 
\subfigure[Continuous errors versus $m$.\label{fig:qbpx-circuit}]{%
\includegraphics[width=0.4\textwidth]{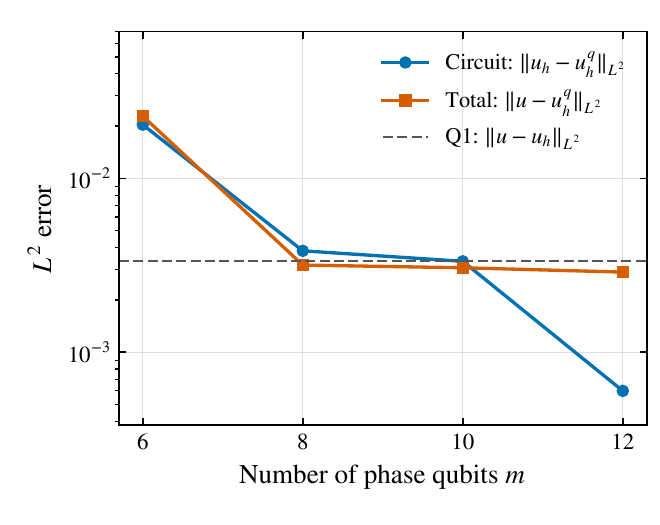}}
\caption{UnitaryLab solution errors at $L=4$ and $c=1$. Panel (a) compares pointwise errors at $m=6$ and $m=12$. Panel (b) shows the circuit and total continuous errors as the phase-register size $m$ increases at fixed $h=1/16$. Dashed lines indicate the Q1 finite element errors; the horizontal reference in (b) is $\|u-u_h\|_{L^2}\approx3.3451\times10^{-3}$, which is independent of $m$.}
\label{fig:qbpx-quantum}
\end{figure}

Figure~\ref{fig:qbpx-quantum} shows how increasing the phase-register size reduces the circuit contribution. At $m=12$, the pointwise errors are close to those of the finite element approximation. The total $L^2$ error is then dominated by spatial discretization; its position slightly below the finite element reference reflects the partial cancellation described above.

To test direct functional estimation, take $g(x)=\sin(\pi x)+\tfrac12\sin(2\pi x)$, so that $J(u)=(g,u)=9/16$. Let $\bm{g}_h^{L^2}$ be the orthonormal coefficients of $\Pi_hg$. At $m=12$, a separate 21-qubit Hadamard-test circuit evaluates $(\bm{g}_h^{L^2})^T\mathcal E R_m\mathcal E^T\bm{f}_h^{L^2}=\bm{c}_S^TR_m\bm{b}_S$, retaining all encoding flags. As in the solution experiment, it explicitly includes the input and test-vector conversions that are absorbed into the supplied oracles in Section~\ref{sec:accuracy}. Rescaling its simulated expectation gives
\[
\widehat J=0.5607326198,\qquad
|\widehat J-J(u)|=1.7674\times10^{-3}.
\]
The corresponding finite element value is $J(u_h)=0.5602631232$. The scalar is obtained directly from the response circuit, without first preparing the normalized solution state. Probabilities and expectations are evaluated from simulated statevectors; finite-shot sampling, amplitude amplification, and amplitude estimation are not included. These experiments verify the coefficient scaling and response formulas but do not establish the asymptotic query bounds numerically.


\section{Comparison and concluding discussion}\label{sec:discussion}

This article explains quantum BPX preconditioning for Q1 reaction--diffusion discretizations within the framework of \cite{DeimlPeterseim2025QFEM,JinLiuMaYu2026BPXrevised}. The construction starts from one-dimensional Gram factors, extends them by tensor products, and uses isometric transfers to assemble the multilevel energy map. For $S_\ell=\omega_\ell P_\ell M_\ell^{-1/2}$, the nonzero spectral interval is $[1/L,12dL]$ and the encoding normalization of $A_S$ is $12dL$, uniformly in $c\ge0$. The Poisson problem is obtained at $c=0$, and each shifted system in a fractional rational approximation has the same reaction--diffusion structure.

Normalizing the one-dimensional value maps before forming tensor products avoids the explicit factor $3^{d-1}$. With the preconditioned right-hand side supplied directly, the inverse and preparation of the multilevel variable state have no algebraic inverse-mesh factor. For the solution state in the local $L^2$-orthonormal basis, the map $\mathcal E$ introduces the amplification ratio $\|\bm{b}_S\|_2/\|u_h\|_{L^2}$. For $0<\varepsilon\le\|u\|_{L^2}/2$, this ratio is bounded by $6\|f\|_{L^2}/(5\sqrt3\|u\|_{L^2})$. The resulting bound \eqref{queriesdata} contains no algebraic inverse-mesh factor when $\|f\|_{L^2}/\|u\|_{L^2}$ is fixed. Its data dependence is essential; increasing the magnitude of the solution alone does not improve the amplification ratio. With the preconditioned test state also supplied, direct estimation of a bounded $L^2$ functional avoids the solution conversion, requires no restriction on the accuracy relative to the solution norm, and satisfies \eqref{linearqueriessimplified}.

For comparison, a classical tensor sine solve with an assembled load vector takes $\mathcal{O}(dLN)$ arithmetic operations, where $N=(2^L-1)^d$, and returns all nodal coefficients. The quantum bounds count queries to supplied preconditioned-input oracles and matrix encodings, and produce either a normalized local $L^2$-orthonormal coefficient state or a prescribed scalar estimate. The solution-state bound uses this representation of the same finite element function; further conversion to a state in the original nodal basis would require a separate cost analysis. Constructing these oracles, elementary gate costs, and classical readout must therefore be included when comparing complete algorithms. 

\section*{Acknowledgments}

Y. Yang was supported by NSFC grant (No.\ 12571469), Scientific Research Innovation Capability Support Project for Young Faculty of China (No.\ SRICSPYF-BS2025132), the Project of Scientific Research Fund of the Hunan Provincial Science and Technology Department (No.\ 2024JJ1008), the 111 Project (No. D23017), Major Scientific and Technological Innovation Platform Project of Hunan Province (No.\ 2024JC1003), and Program for Science and Technology Innovative Research Team in Higher Educational Institutions of Hunan Province of China.
Y. Yu was supported by NSFC grant (No.\ 12301561), the Key Project of Scientific Research Project of Hunan Provincial Department of Education (No.\ 24A0100), the Science and Technology Innovation Program of Hunan Province (No.\ 2025RC3150) and the general program of Hunan Provincial Natural Science Foundation (No.\ 2026JJ50003), and partially supported by NSFC grant No. 12341104.
L. Zhang was supported by the Hunan Provincial Graduate Student Research and Innovation Project (No.\ CX20250933) and the Xiangtan University Graduate Student Research and Innovation Project (No.\ XDCX2025Y188).

\appendix
\section{Proof of Lemma~\ref{lem:feerror}}\label{app:feconstant}

We track the constants in the interpolation and inverse estimates used in the standard finite element error argument; see \cite[Sections~4.4--4.5]{Brenner2008}. Let $Q$ be the $L^2(0,1)$ projection onto continuous piecewise linear functions with zero endpoint values on a uniform mesh of width $h$. With $\gamma=1+\sqrt{12}/\pi$, these estimates give
\[
\begin{aligned}
\|v-Qv\|_{L^2(0,1)}&\le\frac h\pi\|v'\|_{L^2(0,1)},\\
\|(Qv)'\|_{L^2(0,1)}&\le\gamma\|v'\|_{L^2(0,1)},\\
\|(v-Qv)'\|_{L^2(0,1)}&\le\frac{\gamma h}{\pi}\|v''\|_{L^2(0,1)}.
\end{aligned}
\]
The first two inequalities hold for $v\in H_0^1(0,1)$, and the third additionally requires $v\in H^2(0,1)$. The first bound also follows from the degree-one Ritz estimate in \cite[Theorem~4]{SandeManniSpeleers2020} and the best-approximation property of $Q$. Indeed, for the nodal interpolant $Iv$, cellwise Poincar\'e inequalities give $\|v-Iv\|\le(h/\pi)\|(v-Iv)'\|$, $\|(v-Iv)'\|\le(h/\pi)\|v''\|$, and $\|(Iv)'\|\le\|v'\|$. Combining these with $QI=I$, the contraction property of $Q$, and the inverse bound $\|w_h'\|\le\sqrt{12}\,h^{-1}\|w_h\|$ from \cite[Remark~12]{SandeManniSpeleers2020} yields the stated bounds.

Let $Q^{(q)}$ act as $Q$ in coordinate $q$, so that $\Pi_h=\prod_{q=1}^dQ^{(q)}$. This coordinatewise construction is the tensor-product projection described in \cite[Section~6]{SandeManniSpeleers2020}. For each $p$, use
\[
v-\Pi_hv=(I-Q^{(p)})v+Q^{(p)}\Big(I-\prod_{q\ne p}Q^{(q)}\Big)v.
\]
The preceding one-dimensional bounds and orthogonality of successive projection errors in the other coordinates imply
\[
\|\partial_p(v-\Pi_hv)\|_{L^2}
\le\frac{\gamma h}{\pi}\Big(\|\partial_{pp}v\|_{L^2}
+\Bigl(\sum_{q\ne p}\|\partial_{pq}v\|_{L^2}^2\Bigr)^{1/2}\Big).
\]
Squaring and summing over $p$ therefore gives
\[
\|\nabla(v-\Pi_hv)\|_{L^2}
\le\frac{\sqrt2\gamma}{\pi}h\|D^2v\|_{L^2},
\qquad
\|D^2v\|_{L^2}^2:=\sum_{p,q=1}^d\|\partial_{pq}v\|_{L^2}^2.
\]

Let $w_h\in V_h$ be the Poisson Ritz projection of $u$. Galerkin orthogonality gives the same bound for $\|\nabla(u-w_h)\|_{L^2}$. For the dual solution $-\Delta z=u-w_h$ with zero boundary values on the cube, the sine expansion gives $\|D^2z\|_{L^2}=\|u-w_h\|_{L^2}$. The Aubin--Nitsche argument \cite[Chapter~5]{Brenner2008}, testing with $z-\Pi_hz$ and applying the approximation bound to both $u$ and $z$, yields
\[
\|u-w_h\|_{L^2}\le\frac{2\gamma^2}{\pi^2}h^2\|D^2u\|_{L^2}.
\]
To compare with the reaction--diffusion approximation, set $e_h=u_h-w_h$. The discrete equations give
\[
a(e_h,v_h)=c(u-w_h,v_h),\qquad v_h\in V_h.
\]
For $c>0$, testing with $e_h$ gives $\|e_h\|_{L^2}\le\|u-w_h\|_{L^2}$; for $c=0$, $e_h=0$. Since $\|D^2u\|_{L^2}\le\sqrt2\|u\|_{H^2}$, it follows that
\[
\|u-u_h\|_{L^2}
\le\frac{4\sqrt2}{\pi^2}\Big(1+\frac{\sqrt{12}}{\pi}\Big)^2h^2\|u\|_{H^2}
<3h^2\|u\|_{H^2}
\]
for $u\ne0$, with the zero solution immediate. This proves the stated uniform constant.

\renewcommand{\baselinestretch}{1.15}\small
\bibliographystyle{unsrt}
\bibliography{Refs}
\end{document}